\documentclass[12pt]{amsart}
\footskip =0.7cm
\textwidth=15.3cm \textheight=21cm
\oddsidemargin=0.5cm \evensidemargin=0.5cm
\pagestyle{plain}
\usepackage[all]{xy}
\usepackage{graphicx}
\usepackage{amsmath,amsxtra,amssymb,latexsym, amscd, amsthm, amscd}
\usepackage{indentfirst}
\usepackage[mathscr]{eucal}

\newtheorem{thm}{Theorem}[section]
\newtheorem{cor}[thm]{Corollary}
\newtheorem{lem}[thm]{Lemma}
\newtheorem{prop}[thm]{Proposition}
\theoremstyle{definition}

\newtheorem{rem}[thm]{Remark}

\DeclareMathOperator{\lk}{lk}
\DeclareMathOperator{\st}{st}
\DeclareMathOperator{\ch}{char}
\DeclareMathOperator{\core}{core}

\DeclareMathOperator{\girth}{girth}
\DeclareMathOperator{\type}{type}
\DeclareMathOperator{\R}{\mathbb {RP}}
\DeclareMathOperator{\im}{im}

\def\k{{\widetilde\chi}}
\def\h{{\widetilde H}}

\def\zv {\mathbf 0}
\def\e {\mathbf e}


\begin{document}

\title[A Characterization of Triangle-free Gorenstein graphs and Cohen-Macaulayness of second powers of edge ideals]
 {A Characterization of Triangle-free Gorenstein graphs and Cohen-Macaulayness of second powers of edge ideals}

\author{Do Trong Hoang}
\address{Institute of Mathematics, VAST, 18 Hoang Quoc Viet, Hanoi, Viet Nam}
\email{dthoang@math.ac.vn}

\author{ Tran Nam Trung}
\address{Institute of Mathematics, VAST, 18 Hoang Quoc Viet, Hanoi, Viet Nam}
\email{tntrung@math.ac.vn}

\subjclass{13D45, 05C90, 05E40, 05E45.}
\keywords{Graph, Triangle-free, Well-covered, Edge ideal, Cohen-Macaulay, Gorenstein}
\date{}
\dedicatory{}
\commby{}
\begin{abstract}
We graph-theoretically characterize triangle-free Gorenstein graphs $G$. As an application, we classify when $I(G)^2$ is Cohen-Macaulay.
\end{abstract}
\maketitle
\section*{Introduction}

Throughout this paper let $G$ be a simple graph (i.e., a finite, undirected, loopless and without multiple edges) with vertex set $V(G)$ and edge set $E(G)$ and let $k$ be a fixed field. Two vertices $u, v$ of $G$ are {\it adjacent} if $uv$ is an edge of $G$. An {\it independent set}  in $G$ is a set of vertices no two of which are adjacent to each other. An independent set in $G$ is  maximal (with respect to set inclusion) if the set cannot be extended to a larger independent set. The independence number of $G$, denoted by $\alpha(G)$, is the cardinality of the largest independent set in $G$. The girth of a graph $G$, denoted by $\girth(G)$, is the length of any shortest cycle in $G$ or in the case $G$ is a forest we consider the girth to be infinite.  If $G$ has girth at least $4$, then $G$ is {\it triangle-free}.

In a $1970$, Plummer \cite{P1} introduced the notion of considering graphs in which each maximal independent set has the same size $\alpha(G)$; he called a graph having this property a {\it well-covered} graph. In general, to compute $\alpha(G)$ of a graph $G$ is an NP-complete problem (see \cite{K}), but it  is polynomial for well-covered graphs. Characterize well-covered graphs is a difficult problem and the work on well-covered graphs that has appeared in the literature has focused on certain subclasses of well-covered graphs (see \cite{P2} for the survey).

Recently, well-covered graphs with the Cohen-Macaulay property have been studied from an algebraic point of view. Let $R=k[x_1,\ldots,x_n]$ be a polynomial ring of $n$ variables over the field $k$. Let $G$ be a simple graph on the vertex set $\{x_1, \ldots , x_n\}$. We associate to the graph $G$ a quadratic squarefree monomial ideal $$I(G) = (x_ix_j  \mid x_ix_j \in E(G)) \subseteq R,$$ which is called the {\it edge ideal} of $G$. We say that $G$ is {\it Cohen-Macaulay} (resp. {\it Gorenstein}) if $I(G)$ is Cohen-Macaulay (resp. Gorenstein). Since $G$ is well-covered whenever it is Cohen-Macaulay (see e.g., \cite[Proposition $6.1.21$]{Vi2}), it is a wide open problem to characterize graph-theoretically the Cohen-Macaulay graphs. Unfortunately, the Cohen-Macaulayness of graphs depends on the characteristic of the base field $k$ (see, e.g., \cite[Exercise $5.3.31$]{Vi2}), so the characterization now has focused on certain families of Cohen-Macaulay graphs,  such as trees, chordal graphs, bipartite graphs, graphs of girth at least $5$, circulant graphs and so on (see \cite{Vi1}, \cite{HHZ}, \cite{HH1}, \cite{HMT2}, \cite{VVW}).

If we focus on Gorenstein graphs $G$ without isolated vertices, then not only is  $G$ well-covered, but is $G\setminus x$ also well-covered, with $\alpha(G)=\alpha(G\setminus x)$ for any vertex $x$. Such graphs form the so-called class $W_2$ (see \cite{Sp2}). Thus in order to characterize  Gorenstein graphs we should classify the class $W_2$, but this problem is also difficult (see \cite{P2}). Note that the Gorenstein property of graphs is also dependent on the characteristic of the base field $k$ (see Proposition \ref{GK}), so we cannot  graph-theoretically characterize all Gorenstein graphs. In this paper, we are interested in triangle-free Gorenstein graphs. The answer is amazing: a triangle-free graph $G$ is Gorenstein if and only if every non-trivial connected component of $G$ belongs to $W_2$.

Let $\Delta$ be a simplicial complex over the vertex set $\{x_1,\ldots,x_n\}$. A classical result in \cite{CN} implies that $I_{\Delta}^m$ is Cohen-Macaulay ({\it over} $k$) for all $m\geqslant 1$ if and only if $\Delta$ is a complete intersection; where $I_{\Delta}$ is the Stanley-Reisner ideal of $\Delta$. Recently, Terai and Trung \cite{TT} showed that $\Delta$ is a complete intersection whenever $I_{\Delta}^m$ is Cohen-Macaulay for some $m\geqslant 3$. By contrast with this situation we have not known a characterization of $\Delta$ for which $I_{\Delta}^2$ is Cohen-Macaulay yet (see \cite{HMT1}, \cite{MT}, \cite{RTY}, \cite{TT}). On the other hand, Vasconcelos (see \cite[Conjecture (B)]{Va}) suggests that $\Delta$ must be Gorenstein if $I_{\Delta}^2$ is Cohen-Macaulay.

For edge ideals, we will prove that $I(G)^2$ is Cohen-Macaulay if and only if $G$ is triangle-free Gorenstein . The main result of the paper is the following theorem.
\smallskip

\noindent {\bf Theorem $\ref{mth}.$}{\it $\ $ Let $G$ be a simple graph without isolated vertices. Then, the following conditions are equivalent:
\begin{enumerate}
\item $G$ is a triangle-free Gorenstein graph;
\item $G$ is a triangle-free member of $W_2$;
\item $I(G)^2$ is Cohen-Macaulay.
\end{enumerate}
}
\smallskip
The paper is organized as follows. In Section $1$, we recall some basic notations, terminology of the simplicial complexes. In Section $2$, we characterize triangle-free Gorenstein graphs. In the last section, we prove the main result.

\section{Preliminaries}

Let $\Delta$ be a simplicial complex on the vertex set $V(\Delta)=[n]=\{1,2,\ldots,n\}$. Given any field $k$, we attach to $\Delta$ the {\it Stanley-Reisner} ideal $I_{\Delta}$ of $\Delta$ to be the squarefree monomial ideal
$$I_{\Delta} = (x_{j_1} \cdots x_{j_i} \mid j_1  <\cdots < j_i \ \text{ and } \{j_1,\ldots,j_i\} \notin \Delta) \ \text{ in } R = k[x_1,\ldots,x_n]$$
and the {\it Stanley-Reisner} ring of $\Delta$ to be the quotient ring $k[\Delta] = R/I_{\Delta}$.  This provides a bridge between combinatorics and commutative algebra (see \cite{S}). Then, we say that $\Delta$ is Cohen-Macaulay (resp. Gorenstein) over $k$ if $k[\Delta]$ has the same property.

The dimension of a face $F \in\Delta$ is given by $\dim F = |F|-1$; the dimension of $\Delta$, denoted $\dim \Delta$, is the maximum dimension of all its faces. Note that $\dim k[\Delta] = \dim \Delta + 1$. The link of $F$ inside $\Delta$ is the subcomplex of $\Delta$:
$$\lk_{\Delta}  F = \{H\in \Delta \mid H\cup F\in \Delta \ \text{ and } H\cap F=\emptyset\}.$$

The most widely used criterion for determining when a simplicial complex is Cohen-Macaulay is due to Reisner, which says that links have only top homology (see \cite[Corollary $4.2$, Page $60$]{S}).

\begin{lem} \label{Reisner} $\Delta$ is Cohen-Macaulay over $k$ if and only if for all $F\in \Delta$ and all $i < \dim(\lk_{\Delta} F)$, we have $\widetilde{H}_i(\lk_{\Delta} F; k) = \zv$.
\end{lem}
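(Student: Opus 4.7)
The plan is to derive Reisner's criterion from Hochster's formula for the $\mathbb{Z}^n$-graded local cohomology of the Stanley--Reisner ring $k[\Delta]$. Set $R = k[x_1,\dots,x_n]$, $\mathfrak{m} = (x_1,\dots,x_n)$, and $d = \dim k[\Delta] = \dim \Delta + 1$; then $k[\Delta]$ is Cohen--Macaulay precisely when $H^i_{\mathfrak{m}}(k[\Delta]) = 0$ for every $i < d$, and the whole game is to translate this algebraic vanishing into the stated combinatorial condition on links.

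The first step is Hochster's formula: $H^i_{\mathfrak{m}}(k[\Delta])$ is $\mathbb{Z}^n$-graded, concentrated in multidegrees $a \in \mathbb{Z}^n_{\leq 0}$ whose support $F := \{j : a_j < 0\}$ lies in $\Delta$, and in such a multidegree
\[
\dim_k H^i_{\mathfrak{m}}(k[\Delta])_a \;=\; \dim_k \widetilde{H}_{i-|F|-1}(\lk_{\Delta} F;\, k).
\]
I would obtain this by computing the \v{C}ech complex $\check{C}^{\bullet}(x_1,\dots,x_n;\, k[\Delta])$ on the variables and identifying each multidegree piece, after a shift by $|F|+1$, with the reduced simplicial cochain complex of $\lk_{\Delta} F$ (with coefficients in $k$), then dualising to pass to homology.

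Substituting $j = i - |F| - 1$, the Cohen--Macaulay condition becomes $\widetilde{H}_j(\lk_{\Delta} F;\, k) = \zv$ for every $F \in \Delta$ and every $j < d - |F| - 1$. Since $\dim \lk_{\Delta} F \leq d - |F| - 1$ in general, this is \emph{a priori} stronger than the lemma's condition $j < \dim \lk_{\Delta} F$, and the two coincide exactly when $\Delta$ is pure. The forward direction of the equivalence is then immediate: Cohen--Macaulayness forces purity, because Hochster applied to a facet $F$ of size $s$ gives a nonzero contribution $\widetilde{H}_{-1}(\{\emptyset\};k) = k$ to $H^s_{\mathfrak{m}}(k[\Delta])$, forcing $s = d$. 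For the converse I would induct on $\dim \Delta$: the Reisner condition on $\Delta$ passes to each vertex link $\lk_{\Delta} v$ (because links inside $\lk_{\Delta} v$ are links of larger faces in $\Delta$), so by the inductive hypothesis each $\lk_{\Delta} v$ is Cohen--Macaulay and hence pure; together with connectedness of $\Delta$ (a consequence of the Reisner condition at $F = \emptyset$ once $\dim \Delta \geq 1$) this propagates a common facet size through the $1$-skeleton of $\Delta$ and yields purity of $\Delta$ itself. Once $\Delta$ is pure, Reisner's vanishing and the Hochster vanishing are literally the same statement.

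The main obstacle is the careful verification of Hochster's formula itself---tracking the shift by $|F|+1$ between the \v{C}ech indexing and the simplicial cochain indexing, and checking the vanishing in all non-squarefree multidegrees---together with the inductive purity argument needed for the harder implication. Once these ingredients are in place, both directions of the lemma follow by direct bookkeeping.
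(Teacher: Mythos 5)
The paper does not prove this lemma at all---it is quoted as a known result from Stanley's book (\cite[Corollary 4.2, p.~60]{S})---so there is no internal proof to compare against. Your derivation via Hochster's formula, including the observation that the Hochster vanishing range $i<d-|F|-1$ agrees with the stated range $i<\dim\lk_\Delta F$ exactly when $\Delta$ is pure, the facet argument showing Cohen--Macaulayness forces purity, and the induction on dimension plus connectedness propagating a common facet size to get purity from the Reisner condition, is the standard and correct proof of that cited result.
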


Let $f_i$ be the number of faces of $\Delta$ of dimension $i$ for each $i$, and let $d :=\dim \Delta+1$. Then the $f$-vector of $\Delta$ is $f(\Delta) = (f_{-1},f_0, f_1, \ldots, f_{d-1})$; and this vector is related to  the dimensions of  $\h_i(\Delta; k)$ via the {\it reduced Euler characteristic} of $\Delta$:
$$\k(\Delta) :=  \sum_{i=-1}^{d-1} (-1)^i\dim_k\h_{i}(\Delta;k) =\sum_{i=-1}^{d-1} (-1)^i f_i = \sum_{F\in \Delta} (-1)^{|F|-1}.$$
We call $\Delta$ a pure simplicial complex if all its facets have the same dimension;  and $\Delta$ is an Eulerian complex if $\Delta$ is pure and $\k(\lk_{\Delta} F) = (-1)^{\dim \lk_{\Delta} F}$ for all $F\in \Delta$. In this case, note that $\k(\lk_{\Delta} F) = \k(\mathbb S^{d-1-|F|})$ (see \cite[Proposition $3.3$]{S1}). 

The restriction of $\Delta$ to a subset $S$ of $V(\Delta)$ is $\Delta_S :=\{F\in\Delta \mid F \subseteq S\}$. The star of a vertex $v$ in $\Delta$ is $\st_{\Delta}(v) :=\{F\in\Delta \mid F\cup \{v\}\in\Delta\}$. Let $\core(V(\Delta)) := \{x\in V(\Delta) \mid \st_{\Delta}(x) \ne \Delta\}$, then the core of $\Delta$ is $\core(\Delta):= \Delta_{\core(V(\Delta))}$. If $\Delta =\st_{\Delta}(v)$ for some vertex $v$, then $\Delta$ is a cone over $v$. Thus $\Delta=\core(\Delta)$ means $\Delta$ is not a cone. Let $\Delta$ and $\Gamma$ be simplicial complexes with disjoint vertex sets $V(\Delta)$ and $V(\Gamma)$, respectively. Define the join $\Delta * \Gamma$ to be the simplicial complex on the vertex set $V(\Delta) \cup V(\Gamma)$ with faces $F\cup H$, where $F\in\Delta$ and $H\in\Gamma$. It follows that $\Delta = \core(\Delta) *  \left< V(\Delta) \setminus \core(V(\Delta)) \right>$, where for a finite set $P$, we denote $\left<P\right>$ to be the simplex over the set $P$.

We then have a criterion for determining when Cohen-Macaulay complexes $\Delta$ are Gorenstein due to Stanley (see \cite[Theorem $5.1$, Page $65$]{S}).

\begin{lem} \label{Stanley} $\Delta$ is Gorenstein if and only if $\core(\Delta)$ is an Eulerian complex which is Cohen-Macaulay.
\end{lem}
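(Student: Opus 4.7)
The plan is to first reduce to the non-cone case $\Delta=\core(\Delta)$. If $v$ is a cone vertex of $\Delta$ (meaning $\st_\Delta(v)=\Delta$), the variable $x_v$ does not divide any generator of $I_\Delta$, so $k[\Delta]\cong k[\Delta_{V(\Delta)\setminus\{v\}}]\otimes_k k[x_v]$. Iterating yields $k[\Delta]\cong k[\core(\Delta)]\otimes_k k[x_{v_1},\ldots,x_{v_s}]$. Since both Cohen-Macaulayness and Gorensteinness are preserved under (and detected by) polynomial extensions, and since the Eulerian condition refers intrinsically to $\core(\Delta)$, it suffices to prove that when $\Delta=\core(\Delta)$, $\Delta$ is Gorenstein if and only if $\Delta$ is Cohen-Macaulay and Eulerian.

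The central tool for both directions is Hochster's formula
$$\dim_k H^i_{\mathfrak m}(k[\Delta])_{-a} \;=\; \dim_k \widetilde H_{i-|F|-1}(\lk_\Delta F;k)$$
for squarefree $a$ supported on $F\in\Delta$, combined with graded local duality. For the forward implication, Gorenstein implies Cohen-Macaulay, so by Reisner (Lemma \ref{Reisner}) $\widetilde H_j(\lk_\Delta F;k)=0$ for all $j<\dim\lk_\Delta F$. Gorenstein further forces $\omega_{k[\Delta]}\cong k[\Delta]$ up to a $\mathbb Z^n$-graded shift; reading this through local duality and Hochster's formula pins down the top reduced homology of each link at dimension exactly one. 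The reduced Euler characteristic of $\lk_\Delta F$ then collapses to $(-1)^{\dim\lk_\Delta F}$, establishing the Eulerian condition.

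Conversely, assume $\Delta$ is Cohen-Macaulay and Eulerian. Reisner again concentrates the reduced homology of every link in top degree, and the Eulerian identity then forces $\dim_k \widetilde H_{\dim\lk_\Delta F}(\lk_\Delta F;k)=1$ for each $F$. Feeding this back through Hochster's formula, the $\mathbb Z^n$-graded Hilbert series of the top local cohomology $H^d_{\mathfrak m}(k[\Delta])$ coincides with that of $k[\Delta]$ under the shift prescribed by local duality. Since $\omega_{k[\Delta]}$ is a squarefree module, matching multigraded Hilbert series forces $\omega_{k[\Delta]}\cong k[\Delta]$, i.e., $\Delta$ is Gorenstein.

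The main obstacle I expect is the converse direction: upgrading the pointwise dimension-one statement on top link-homologies to the ring-theoretic isomorphism $\omega_{k[\Delta]}\cong k[\Delta]$. The cleanest route exploits the squarefree structure of $\omega_{k[\Delta]}$, so that coincidence of multigraded Hilbert series suffices to conclude the isomorphism, rather than constructing a generator of $\omega_{k[\Delta]}$ by an explicit cochain-level computation.
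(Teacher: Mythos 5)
The paper offers no proof of this lemma at all---it is quoted verbatim from Stanley's book \cite[Theorem 5.1]{S}---so your proposal must stand on its own. Your reduction to the case $\Delta=\core(\Delta)$ via $k[\Delta]\cong k[\core(\Delta)]\otimes_k k[x_{v_1},\ldots,x_{v_s}]$ is fine, and the forward implication is essentially sound, though you still owe a short argument that the $\mathbb Z^n$-graded shift in $\omega_{k[\Delta]}\cong k[\Delta](-a)$ is actually zero; this is precisely where $\Delta=\core(\Delta)$ enters, since a nonzero squarefree shift $a$ would force $\mathrm{supp}(a)$ to be contained in every facet, i.e.\ to consist of cone vertices.

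The converse, however, has a genuine gap at exactly the step you flag as the main obstacle. A squarefree module is \emph{not} determined by its $\mathbb Z^n$-graded Hilbert function: the isomorphism class also encodes the multiplication maps $M_F\to M_{F\cup\{i\}}$ between the squarefree graded pieces, and these are invisible to the Hilbert function. Already over $k[x]$ the squarefree modules $k[x]$ and $k\oplus(x)$ have identical Hilbert functions without being isomorphic. So the principle ``matching multigraded Hilbert series forces $\omega_{k[\Delta]}\cong k[\Delta]$'' is false as stated. What you actually need is that $\omega_{k[\Delta]}$ is \emph{cyclic}, i.e.\ $\type(k[\Delta])=1$: the one-dimensional piece $\omega_\emptyset\cong\widetilde H_{d-1}(\Delta;k)^*$ must generate, which means the structure maps $\omega_\emptyset\to\omega_F$ (dually, maps between the top homologies of $\Delta$ and of $\lk_\Delta F$) must be isomorphisms for every $F\in\Delta$---and knowing that source and target are both one-dimensional does not make a map between them nonzero. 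Establishing this nonvanishing is the real content of Stanley's theorem; it is usually done by induction on $|F|$ using the long exact sequence relating $\Delta$, $\Delta\setminus v$ and $\lk_\Delta(v)$ together with the Cohen--Macaulay vanishing, or by computing $\type(k[\Delta])=\beta_{n-d}(k[\Delta])$ through Hochster's formula for graded Betti numbers and showing that all contributions from proper subsets $W\subsetneq[n]$ vanish. As written, your argument would deduce cyclicity of the canonical module from a pure dimension count, which is exactly what Hilbert functions cannot give you.
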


For a subset $S$  of the vertex set of $\Delta$, let $\Delta\setminus S:=\{F\in\Delta\mid F\subseteq V(\Delta)\setminus S\}$. If $S =\{x\}$, we write $\Delta\setminus x$ to mean $\Delta\setminus \{x\}$. Clearly, $\Delta\setminus x = \{F\in\Delta \ | \ x\notin F\}$.

$\Delta$ is called doubly Cohen-Macaulay (see \cite{B}) if $\Delta$ is Cohen-Macaulay, and for every vertex $x$ of $\Delta$ the subcomplex $\Delta\setminus x$ is Cohen-Macaulay of the same dimension as $\Delta$. If $\Delta$ is Cohen-Macaulay, then $k[\Delta]$ has a minimal free resolution of the form:
$$\zv \longrightarrow R^{\beta_{n-d}(k[\Delta])} \longrightarrow\cdots  \longrightarrow R^{\beta_1(k[\Delta])}\longrightarrow R^{\beta_0(k[\Delta])}\longrightarrow k[\Delta] \longrightarrow \zv$$
where $d=\dim k[\Delta]$ and $\beta_i(k[\Delta])$ is the $i$-th Betti number of $k[\Delta]$ for each $i$. The number $\beta_{n-d}(k[\Delta])$ is called the {\it type} of $k[\Delta]$ and is denoted by $\type(k[\Delta])$. We then have $\Delta$ is Gorenstein if and only if $\type(k[\Delta])=1$ (see \cite[Theorem $12.5$, Page $50$]{S}) and $\Delta$ is doubly Cohen-Macaulay if and only if $\type(k[\Delta]) = (-1)^{d-1} \k(\Delta)$ (see \cite{B}). If $\Delta$ is Gorenstein with $\Delta = \core(\Delta)$, then $\k(\Delta) = (-1)^{d-1}$ by Lemma $\ref{Stanley}$. Hence,

\begin{lem}\label{G2CM} If $\Delta$ is a Gorenstein simplicial complex with $\Delta=\core(\Delta)$, then $\Delta$ is doubly Cohen-Macaulay.
\end{lem}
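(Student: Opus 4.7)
The plan is to chain together the two numerical characterizations collected just above the statement: the characterization of Gorenstein complexes via Cohen-Macaulay type equal to $1$, and Baclawski's characterization of doubly Cohen-Macaulay complexes via $\type(k[\Delta]) = (-1)^{d-1}\k(\Delta)$. Concretely, I want to verify the equality $\type(k[\Delta]) = (-1)^{d-1}\k(\Delta)$ under our hypotheses and then quote Baclawski's criterion.

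First I would read off the type. Since $\Delta$ is Gorenstein, $\Delta$ is in particular Cohen-Macaulay and $\type(k[\Delta]) = 1$ by Stanley's theorem. Next I would compute the reduced Euler characteristic. The hypothesis $\Delta = \core(\Delta)$ together with Lemma \ref{Stanley} forces $\core(\Delta) = \Delta$ to be an Eulerian complex, so $\k(\lk_\Delta F) = (-1)^{\dim \lk_\Delta F}$ for every face $F \in \Delta$. Specializing to $F = \emptyset$, where $\lk_\Delta \emptyset = \Delta$ and $\dim \lk_\Delta \emptyset = d-1$, yields $\k(\Delta) = (-1)^{d-1}$, which is exactly the intermediate identity already flagged in the paragraph preceding the lemma.

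Combining the two gives
\[
(-1)^{d-1}\k(\Delta) \;=\; (-1)^{d-1}(-1)^{d-1} \;=\; 1 \;=\; \type(k[\Delta]),
\]
and Baclawski's characterization then delivers the conclusion that $\Delta$ is doubly Cohen-Macaulay.

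This is essentially a bookkeeping argument, so there is no real obstacle; the only points worth double-checking are that the Eulerian identity is applied at the correct face (the empty one, so that the link is all of $\Delta$) and that the invocation of Baclawski's criterion is legitimate in our setting, which is fine because the Gorenstein hypothesis supplies the Cohen-Macaulayness tacitly required on the doubly-CM side of that criterion.
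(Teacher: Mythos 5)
Your argument is correct and is essentially identical to the paper's own justification, which likewise combines $\type(k[\Delta])=1$ for Gorenstein complexes with Baclawski's criterion $\type(k[\Delta])=(-1)^{d-1}\k(\Delta)$ and the Eulerian identity $\k(\Delta)=(-1)^{d-1}$ coming from Lemma \ref{Stanley} under the hypothesis $\Delta=\core(\Delta)$. Nothing further is needed.
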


The following lemma is the key to investigate Cohen-Macaulay simplicial complexes in this paper.

\begin{lem}\label{double} Let $\Delta$ be a Gorenstein simplicial complex with $\Delta = \core(\Delta)$. If $S$ is a subset of $V(\Delta)$ such that $\Delta_S$ is a cone, then we have $\h_i(\Delta \setminus S,k) = \zv$ for all $i$.
\end{lem}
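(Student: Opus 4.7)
The plan is to induct on $|S|$, exploiting that by Lemma~\ref{Stanley} the hypotheses make $\Delta$ an Eulerian Cohen--Macaulay complex, hence a homology $(d-1)$-sphere over $k$: $\h_i(\Delta;k)=\zv$ for $i\neq d-1$, $\h_{d-1}(\Delta;k)=k$, and analogous statements hold for every link $\lk_{\Delta}(F)$. A key auxiliary fact I will use is that for any vertex $u$, the link $\lk_{\Delta}(u)$ is itself Gorenstein with $\lk_{\Delta}(u)=\core(\lk_{\Delta}(u))$ (a homology sphere of nonnegative dimension is not a cone, so it equals its core), so the inductive hypothesis can be applied to links.

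For the base case $|S|=1$ with $S=\{v\}$, Lemma~\ref{G2CM} gives that $\Delta\setminus v$ is Cohen--Macaulay of dimension $d-1$, whence $\h_i(\Delta\setminus v;k)=\zv$ for $i<d-1$. To kill the top degree, decompose $\Delta=\st_{\Delta}(v)\cup(\Delta\setminus v)$ with intersection $\lk_{\Delta}(v)$ and apply inclusion--exclusion for reduced Euler characteristics: using $\k(\st_{\Delta}(v))=0$, $\k(\Delta)=(-1)^{d-1}$, and $\k(\lk_{\Delta}(v))=(-1)^{d-2}$, I obtain $\k(\Delta\setminus v)=0$, and Cohen--Macaulayness then forces $\h_{d-1}(\Delta\setminus v;k)=\zv$.

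For the inductive step $|S|\geq 2$, let $v$ be the apex of $\Delta_{S}$, pick any $u\in S\setminus\{v\}$, and set $S'=S\setminus\{u\}$. The cone condition on $\Delta_{S}$ restricts to make $\Delta_{S'}$ a cone over $v$, so by induction $\h_{*}(\Delta\setminus S';k)=\zv$. Writing $\Phi=\Delta\setminus S'$ so that $\Delta\setminus S=\Phi\setminus u$, the long exact sequence of the pair $(\Phi,\Phi\setminus u)$ together with the standard identification $H_{i}(\Phi,\Phi\setminus u;k)\cong \h_{i-1}(\lk_{\Phi}(u);k)$ collapses, using $\h_{*}(\Phi;k)=\zv$, to yield $\h_{i}(\Delta\setminus S;k)\cong \h_{i}(\lk_{\Phi}(u);k)$ for all $i$. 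A direct computation identifies $\lk_{\Phi}(u)=\Gamma\setminus S^{*}$, where $\Gamma=\lk_{\Delta}(u)$ and $S^{*}=S\cap V(\Gamma)$; one verifies $v\in S^{*}$ (because $\{u,v\}\in\Delta$ by the cone property), $u\notin V(\Gamma)$ (so $|S^{*}|<|S|$), and $\Gamma_{S^{*}}$ is itself a cone over $v$ (for $F\in\Gamma_{S^{*}}$, $F\cup\{u\}\in\Delta_{S}$ forces $F\cup\{u,v\}\in\Delta_{S}$, hence $F\cup\{v\}\in\Gamma_{S^{*}}$). The induction hypothesis applied to the Gorenstein complex $\Gamma=\core(\Gamma)$ and subset $S^{*}$ then closes the argument. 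The main technical obstacle is the bookkeeping required at each stage to verify both that the cone condition descends to the relevant restriction and that links retain the core-equals-complex property; both are direct but must be handled carefully.
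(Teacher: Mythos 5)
Your proof is correct and takes essentially the same route as the paper's: induction on $|S|$, with the base case settled by double Cohen--Macaulayness plus a top-degree count, and the inductive step peeling off one vertex $u\in S\setminus\{v\}$ and applying the induction hypothesis both to $(\Delta, S\setminus\{u\})$ and to $(\lk_{\Delta}(u), S\cap V(\lk_{\Delta}(u)))$ via the Mayer--Vietoris/link exact sequence. The only cosmetic differences are that you kill $\h_{d-1}(\Delta\setminus v;k)$ with the reduced Euler characteristic where the paper uses the short exact sequence relating $\h_{d-1}(\Delta;k)$, $\h_{d-2}(\lk_{\Delta}(v);k)$ and $\h_{d-1}(\Delta\setminus v;k)$, and that you record an isomorphism $\h_i(\Delta\setminus S;k)\cong \h_i(\lk_{\Phi}(u);k)$ where the paper sandwiches the middle term between two vanishing ones.
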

\begin{proof}  We prove the lemma by induction on $|S|$. If $|S|=1$, then $S=\{v\}$ for some vertex $v$ of $\Delta$. Let $d:=\dim(\Delta)+1$. By Lemma $\ref{G2CM}$ we have $\Delta$ is doubly Cohen-Macaulay, so $\Delta\setminus v$ is Cohen-Macaulay of dimension $d-1$, and so  by Reisner's criterion we have $\h_i(\Delta\setminus v;k) = \zv$ for all $i < d-1$. Hence it remains to prove, in this case, that  $\h_{d-1}(\Delta\setminus v;k) = 0$. By \cite[Lemma $2.1$]{Hi} we have the following exact sequence:
$$\zv \longrightarrow \h_{d-2}(\lk_{\Delta}(v);k)   \longrightarrow \h_{d-1}(\Delta;k) \longrightarrow   \h_{d-1}(\Delta\setminus v;k)   \longrightarrow \zv.$$
This yields $\dim_k \h_{d-1}(\Delta\setminus v;k) = \dim_k \h_{d-1}(\Delta;k) - \dim_k \h_{d-2}(\lk_{\Delta}(v);k)$.
By Lemma $\ref{Stanley}$ we obtain $\dim_k \h_{d-1}(\Delta;k) = \dim_k \h_{d-2}(\lk_{\Delta}(v);k)=1$. Thus,
$\dim_k \h_{d-1}(\Delta\setminus v;k) =0$, and thus $\h_{d-1}(\Delta\setminus v;k) =\zv$, as claimed.

If $|S|\geqslant 2$. Assume that for all Gorenstein simplicial complexes $\Gamma$ with $\Gamma=\core(\Gamma)$, and for all subsets $T$ of $V(\Gamma)$ such that $|T| < |S|$ and $\Gamma_T$ is a cone; then we have
$$\h_i(\Gamma \setminus T,k) = \zv, \ \text{ for all }\ i.$$

Assume that $\Delta_S$ is a cone with vertex $v$. Take any $x\in S\setminus\{v\}$ and then let $T := S\setminus\{x\}$ and $\Lambda :=\Delta\setminus T$. Since $\Delta_T$ is also a cone with vertex $v$ and $|T| = |S|-1$, by the induction hypothesis we have $\h_i(\Lambda;k) =\h_i(\Delta\setminus T;k) = \zv$. Let $T' := T \cap V(\lk_{\Delta}(x))$, so that $\lk_{\Lambda}(x) =  \lk_{\Delta}(x)\setminus T'$. From Lemma $\ref{Stanley}$ we imply that $\lk_{\Delta}(x)$ is Gorenstein with $\lk_{\Delta}(x) =\core(\lk_{\Delta}(x))$. Note that $\lk_{\Delta}(x)_{T'}$ is a cone over $v$ and $|T'| \leqslant |T| < |S|$, so $\h_i(\lk_{\Lambda}(x); k) = \h_i(\lk_{\Delta}(x)\setminus T'; k) =\zv$ by the induction hypothesis.

Finally, \cite[Lemma $2.1$]{Hi} gives rise to the following exact sequence:
$$\h_i(\Lambda;k)  \longrightarrow \h_i(\Lambda\setminus x;k)  \longrightarrow  \h_i(\lk_{\Lambda}(x);k).$$
Together with the facts $\h_i(\lk_{\Lambda}(x); k)=\zv$ and $\h_i(\Lambda;k)=\zv$ we obtain $\h_i(\Lambda\setminus x;k)=\zv$. As $\Lambda\setminus x = \Delta\setminus S$, which implies $\h_i(\Delta\setminus S;k) = \zv$, as required.
\end{proof}

The lemma $\ref{double}$ has an interesting consequence.

\begin{cor} \label{C1} Let $\Delta$ be a Gorenstein simplicial complex. Then the simplicial complex $\Delta \setminus F$ is Cohen-Macaulay for every face $F$ of $\Delta$.
\end{cor}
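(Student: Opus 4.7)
The plan is to verify Reisner's criterion (Lemma~\ref{Reisner}) for $\Delta\setminus F$, using Lemma~\ref{double} as the key homological input and induction on $\dim\Delta$ to handle the links of nonempty faces.

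First I reduce to the case $\Delta=\core(\Delta)$. Writing $\Delta=\core(\Delta)*\sigma$ with $\sigma$ the simplex on the non-core vertices and splitting $F=F_c\cup F_n$ into its core and non-core pieces, we obtain $\Delta\setminus F=(\core(\Delta)\setminus F_c)*(\sigma\setminus F_n)$. The factor $\sigma\setminus F_n$ is again a simplex, and since joining with a simplex preserves Cohen--Macaulayness, the Cohen--Macaulayness of $\Delta\setminus F$ is equivalent to that of $\core(\Delta)\setminus F_c$; moreover $F_c$ is a face of the Gorenstein complex $\core(\Delta)$. Relabeling, assume $\Delta=\core(\Delta)$, and, as $F=\emptyset$ is trivial, assume $F\neq\emptyset$.

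To verify Reisner, for each face $G$ of $\Delta\setminus F$ note $\lk_{\Delta\setminus F}(G)=\lk_\Delta(G)\setminus F$, and we must show $\h_i(\lk_{\Delta\setminus F}(G);k)=\zv$ for every $i<\dim\lk_{\Delta\setminus F}(G)$. At $G=\emptyset$, the subcomplex $\Delta_F$ is the simplex on $F$, which is a cone; Lemma~\ref{double} with $S=F$ then yields $\h_i(\Delta\setminus F;k)=\zv$ for all $i$. For a nonempty $G$, I proceed by induction on $\dim\Delta$: the link $\lk_\Delta(G)$ is Gorenstein and, as established in the proof of Lemma~\ref{double}, equal to its own core, with dimension strictly less than $\dim\Delta$. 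When $G\cup F\in\Delta$, the set $F':=F\cap V(\lk_\Delta(G))$ equals $F$ and is a face of $\lk_\Delta(G)$ with $\lk_\Delta(G)\setminus F'=\lk_\Delta(G)\setminus F$; the inductive hypothesis applied to $(\lk_\Delta(G),F')$ delivers the Cohen--Macaulayness of $\lk_\Delta(G)\setminus F$, and the vanishing at $G$ follows from Reisner for that complex.

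The main obstacle is the remaining subcase $G\cup F\notin\Delta$, where $F'$ is no longer a face of $\lk_\Delta(G)$. If $V(\lk_\Delta(G))\subseteq F$, then $\lk_\Delta(G)\setminus F=\{\emptyset\}$ and Reisner's condition at $G$ is vacuous. Otherwise, picking a vertex $w\in V(\lk_\Delta(G))\setminus F$ and writing
\[
\lk_\Delta(G)\setminus F = \bigl(\st_{\lk_\Delta(G)}(w)\setminus F\bigr)\cup\bigl(\lk_\Delta(G)\setminus(F\cup\{w\})\bigr)
\]
with intersection $\lk_\Delta(G\cup\{w\})\setminus F$, the star piece is a cone on $w$ and hence acyclic, so the Mayer--Vietoris / Hibi exact sequence \cite[Lemma~2.1]{Hi} reduces the required homology vanishing to analogous statements for the Gorenstein link $\lk_\Delta(G\cup\{w\})$ (of smaller dimension, where the inductive hypothesis applies) and for $\lk_\Delta(G)\setminus(F\cup\{w\})$. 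Strengthening the inductive statement in the spirit of Lemma~\ref{double}, namely ``$\Delta\setminus S$ is Cohen--Macaulay for every subset $S$ with $\Delta_S$ a cone,'' absorbs the auxiliary term cleanly; carrying out this strengthened induction is the technical heart of the argument.
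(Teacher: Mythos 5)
Your reduction to $\Delta=\core(\Delta)$ and your treatment of the empty face (applying Lemma~\ref{double} with $S=F$, since $\Delta_F=\seq{F}$ is a cone) coincide with the paper's argument. The gap is in the case you yourself flag as ``the technical heart'': a nonempty face $G$ of $\Delta\setminus F$ with $G\cup F\notin\Delta$ and $F':=F\cap V(\lk_\Delta(G))\neq\emptyset$. There you abandon Lemma~\ref{double} and set up a Mayer--Vietoris induction that is not carried out, and whose hypotheses are not verified: to absorb the term $\lk_\Delta(G)\setminus(F\cup\{w\})$ by your strengthened statement you would need $\bigl(\lk_\Delta(G)\bigr)_{F'\cup\{w\}}$ to be a cone, which is not established (and is not clear), and the intersection term $\lk_\Delta(G\cup\{w\})\setminus F$ reproduces the very same difficulty one dimension down, so the induction does not obviously close. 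As written, the proof is incomplete precisely where the work is.

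The point you are missing is that your dichotomy ``$G\cup F\in\Delta$ versus $G\cup F\notin\Delta$'' is not the relevant one; the paper works with $F'=F\cap V(\lk_\Delta(G))$ throughout. Since $\lk_\Delta(G)$ is again Gorenstein and equal to its own core (Lemma~\ref{Stanley}), and since $F'$ is a face of $\lk_\Delta(G)$, the restriction $\bigl(\lk_\Delta(G)\bigr)_{F'}=\seq{F'}$ is a cone, so Lemma~\ref{double} applied \emph{inside the link} gives $\h_i\bigl(\lk_\Delta(G)\setminus F';k\bigr)=\zv$ for all $i$, and $\lk_\Delta(G)\setminus F'=\lk_{\Delta\setminus F}(G)$. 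This disposes of every nonempty $G$ with $F'\neq\emptyset$ in one stroke, with no induction on dimension and no Mayer--Vietoris; the case $F'=\emptyset$ is immediate from Reisner's criterion for $\Delta$ itself. If you want to repair your write-up, replace the $G\cup F\notin\Delta$ branch by the observation that $F'$ is a face of the link (in the paper's application the complexes are flag, where this is automatic since every vertex of $F'$ forms a face with $G$ and $F'$ is a face of $\Delta$), and then invoke Lemma~\ref{double} exactly as in the $G\cup F\in\Delta$ branch.
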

\begin{proof} Assume that $\Delta = \core(\Delta) * \left<P\right>$ where $P = V(\Delta)\setminus \core(V(\Delta))$. Let $F_1 := F \cap V(\core(\Delta))$ and $F_2 := P\setminus F$. Then we have $\Delta\setminus F = (\core(\Delta) \setminus F_1) * \left<F_2\right>$, hence $\Delta\setminus F$ is Cohen-Macaulay if so is $\core(\Delta)\setminus F_1$. Therefore, we may assume that $\Delta = \core(\Delta)$.

We now claim that for all faces $S$ of $\Delta\setminus F$ and for all integers $i < \dim(\Delta\setminus F) - |S|$, then $\h_i(\lk_{\Delta\setminus F}(S);k) = \zv$. Indeed, by Lemma $\ref{Stanley}$ we deduce that $\lk_{\Delta}(S)$ is Gorenstein with $\core(\lk_{\Delta}(S)) = \lk_{\Delta}(S)$. Next, we note that $\lk_{\Delta\setminus F}(S) = \lk_{\Delta}(S) \setminus F'$ where $F' = F\cap V(\lk_{\Delta}(S))$. Now if $F' = \emptyset$, then $\lk_{\Delta\setminus F}(S) = \lk_{\Delta}(S)$. Since $\dim(\Delta\setminus F) \leqslant \dim(\Delta)$, the claim follows from Reisner's criterion. If $F'\ne\emptyset$, then $\lk_{\Delta}(S)_{F'}$ is a cone because $F'$ is also a face of  $\lk_{\Delta}(S)$. By Lemma $\ref{double}$ we have
$$\h_{i}(\lk_{\Delta\setminus F}(S);k) = \h_i(\lk_{\Delta F}(S) \setminus F';k) =\zv,$$ 
as claimed.

Together with Reisner's criterion, this claim yields  $\Delta\setminus F$ is Cohen-Macaulay.
\end{proof}

\section{A Characterization of Triangle-free Gorenstein Graphs}

In this section we will graph-theoretically characterize triangle-free Gorenstein graphs. For a graph $G$, let $\Delta(G)$ be the set of all independent sets of $G$. Then, $\Delta(G)$ is a simplicial complex and it is the so-called independence complex of $G$. We can see that $I(G) = I_{\Delta(G)}$ and $\dim(\Delta(G)) = \alpha(G)-1$, where $\alpha(G)$ denotes the independence number of $G$. Notice that $\Delta(G) = \core(\Delta(G))$ if and only if $G$ has no isolated vertices; and $G$ is well-covered if and only if $\Delta(G)$ is pure.

We now start by showing that the Gorenstein property of graphs in general depends on the characteristic of $k$ . Gr\"{a}be\cite{GR} showed that the Gorenstein property of a certain triangulation of $\R^3$ depends on the characteristic of the base field $k$. Actually, any triangulation of $\R^3$ is Gorenstein if and only if $\ch(k)\ne 2$.  Using this fact, we can prove the following fact.

\begin{prop}\label{GK} The Gorenstein property of graphs depends on the characteristic of the base field.
\end{prop}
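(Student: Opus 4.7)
The strategy is to exhibit a single graph $G$ whose independence complex $\Delta(G)$ is a triangulation of $\R^3$; by the result of Gr\"abe cited just above, such a $\Delta(G)$ is Gorenstein over $k$ if and only if $\ch(k)\neq 2$, which immediately proves the proposition.

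The only subtlety is that $\Delta(G)$ is automatically a flag complex (its minimal non-faces are precisely the edges of $G$, all of size two), so what I actually need is a \emph{flag} triangulation of $\R^3$. To produce one, I would start with any fixed triangulation $\Sigma$ of $\R^3$ and pass to its barycentric subdivision $\Delta := \sd(\Sigma)$. Since $|\Delta|\cong|\Sigma|\cong\R^3$, the complex $\Delta$ is again a triangulation of $\R^3$. Moreover, barycentric subdivisions are always flag: the vertices of $\sd(\Sigma)$ are the faces of $\Sigma$, and a collection of such vertices spans a simplex in $\sd(\Sigma)$ if and only if they form a chain under inclusion, a condition determined entirely by its $2$-element subsets.

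Now define $G$ to be the graph on vertex set $V(\Delta)$ whose edges are exactly the pairs $\{u,v\}$ that are \emph{not} edges of $\Delta$. Because $\Delta$ is flag, its Stanley--Reisner ideal $I_{\Delta}$ coincides with the edge ideal $I(G)$, so $\Delta(G)=\Delta$. Applying Gr\"abe's theorem to $\Delta$ gives that $k[\Delta(G)]=k[\Delta]$ is Gorenstein if and only if $\ch(k)\neq 2$, proving that the Gorenstein property of $G$ depends on the characteristic of the base field. The only mildly technical step is the flagness of the barycentric subdivision, but this is a classical fact and presents no real obstacle; everything else is bookkeeping between simplicial complexes and their associated edge ideals.
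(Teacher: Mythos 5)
Your proposal is correct and takes essentially the same route as the paper: the paper also passes to the barycentric subdivision of a triangulation of $\R^3$ (specifically Walkup's minimal one, with $f$-vector $(1,11,51,80,40)$, chosen only to make the resulting graph concrete with $182$ vertices), observes that the minimal non-faces of the subdivision are pairs of non-comparable elements so that its Stanley--Reisner ideal is an edge ideal, and invokes Gr\"abe's theorem. The only difference is that you work with an arbitrary triangulation rather than a specific one, which is immaterial.
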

\begin{proof}
In order to get such a graph let us take a minimal triangulation $\Delta$ of the projective space $\R^3$ with the $f$-vector $f(\Delta) = (1,11, 51, 80, 40)$ constructed by Walkup \cite{W}. Let $\Lambda$ be the barycentric subdivision of $\Delta$, i.e., $\Lambda$ is the simplicial complex on ground set $\Delta \setminus \{\emptyset\}$ whose simplices are flags $F_0 \subsetneq F_1 \subsetneq \cdots \subsetneq F_i$ of  elements $F_j \in  \Delta \setminus \{\emptyset\}, j \leqslant i$. Clearly, the minimal non-faces of $\Lambda$ are subsets of $\Delta$ with exactly two non-comparable elements, therefore $I_{\Lambda}$ is an edge ideal of a graph, say $G$. Since $\Lambda$ is a triangulation of $\R^3$, $G$ is Gorenstein if and only if $\ch(k)\ne 2$. Notice that $G$ has $182$ vertices since $f(\Delta) = (1, 11, 51, 80, 40)$.
\end{proof}

The neighborhood of a vertex $x$ of $G$ is the set $N_G(x) := \{y\in V(G) \mid xy \in E(G)\}$. For an independent set $S$ of $G$ we denote the neighborhood of $S$ by $N_G(S) := \{x \in V(G)\setminus S \mid N_G(x) \cap S \ne\emptyset\}$ and the localization of $G$ with respect to $S$ by $G_S := G\setminus (S\cup N_G(S))$; so that $\Delta(G_S) = \lk_{\Delta(G)}(S)$; where for a subset $U$ of $V(G)$ we denote $G\setminus U$ to be the induced subgraph of $G$ on the vertex set $V(G)\setminus U$.

Note that $G_S$ is Cohen-Macaulay (resp. Gorenstein) if so is $G$ by Lemma $\ref{Reisner}$ (resp. Lemma $\ref{Stanley}$).

\begin{lem}  \cite[Lemma $1$]{FHN}\label{LC} If $G$ is a well-covered graph and $S$ is a set of independent vertices of $G$, then $G_S$ is well-covered. Moreover, $\alpha(G_S) =\alpha(G)-|S|$.
\end{lem}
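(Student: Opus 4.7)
The plan is to set up a bijection between maximal independent sets of $G_S$ and maximal independent sets of $G$ that contain $S$, and then transfer the cardinality condition across this bijection.

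First, I would check that the map $T\mapsto T\cup S$ sends independent sets of $G_S$ to independent sets of $G$. If $T$ is independent in $G_S$, then $T\subseteq V(G)\setminus (S\cup N_G(S))$, so no edge of $G$ joins a vertex of $T$ to a vertex of $S$; combined with the independence of $T$ and $S$ separately, $T\cup S$ is independent in $G$. Conversely, if $M$ is an independent set of $G$ with $S\subseteq M$, then $M\setminus S$ avoids $N_G(S)$ (otherwise some vertex of $M\setminus S$ would be adjacent to a vertex of $S\subseteq M$), hence $M\setminus S$ is an independent set of $G_S$. These two operations are mutually inverse.

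Next I would show that maximality is preserved under this correspondence. Suppose $T$ is a maximal independent set of $G_S$, and assume for contradiction that some $w\in V(G)\setminus(T\cup S)$ makes $T\cup S\cup\{w\}$ independent in $G$. Then $w$ is adjacent to no vertex of $S$, so $w\notin S\cup N_G(S)$ and therefore $w\in V(G_S)$; moreover $w$ is adjacent to no vertex of $T$, contradicting maximality of $T$ in $G_S$. The reverse implication is symmetric: if $T\cup S$ is maximal in $G$, then any vertex $w\in V(G_S)\setminus T$ that could be added to $T$ in $G_S$ would also extend $T\cup S$ in $G$.

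Finally, since $G$ is well-covered, every maximal independent set of $G$ has size $\alpha(G)$. Starting with any maximal independent set $T$ of $G_S$, we get a maximal independent set $T\cup S$ of $G$ with $|T\cup S|=|T|+|S|=\alpha(G)$, so $|T|=\alpha(G)-|S|$. This shows that every maximal independent set of $G_S$ has cardinality $\alpha(G)-|S|$, i.e., $G_S$ is well-covered with $\alpha(G_S)=\alpha(G)-|S|$. (To see that at least one such maximal set exists, extend $S$ to a maximal independent set $M$ of $G$; then $M\setminus S$ is maximal in $G_S$ by the correspondence.)

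There is no real obstacle here: the only thing to be careful about is the automatic disjointness $T\cap S=\emptyset$ (since $T\subseteq V(G_S)$ and $S\cap V(G_S)=\emptyset$), which is exactly what makes the cardinality count $|T\cup S|=|T|+|S|$ work and what makes the proof a short combinatorial verification rather than something requiring the machinery of Sections~1 and~2.
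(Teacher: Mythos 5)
Your proof is correct: the correspondence $T\mapsto T\cup S$ between maximal independent sets of $G_S$ and maximal independent sets of $G$ containing $S$ is set up and verified properly, and the cardinality count (including the edge case where $G_S$ is empty) goes through. The paper gives no proof of its own here --- it simply cites \cite[Lemma 1]{FHN} --- and your argument is the standard short combinatorial verification one would expect for that reference, so there is nothing substantive to contrast.
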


The same holds for graphs in $W_2$. Recall that a graph $G$ is a member of $W_2$ if $G$ is well-covered and $G\setminus x$ is well-covered with $\alpha(G)=\alpha(G\setminus x)$ for all $x\in V(G)$.

\begin{lem} \label{WLC} If a graph $G$ is in $W_2$ and $S$ is an independent set of $G$, then $G_S$ is in $W_2$ whenever  $|S| < \alpha(G)$.
\end{lem}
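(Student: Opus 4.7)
The plan is to verify the two defining properties of $W_2$ for $G_S$ directly, leveraging Lemma \ref{LC} twice: once for $G$ itself, and once for each deletion $G\setminus y$.

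First, since $G$ is well-covered and $S$ is independent, Lemma \ref{LC} yields that $G_S$ is well-covered with $\alpha(G_S)=\alpha(G)-|S|$. The hypothesis $|S|<\alpha(G)$ guarantees $\alpha(G_S)\geqslant 1$, so $G_S$ is nonempty and carries at least one vertex; this takes care of the well-coveredness half of the $W_2$ condition.

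For the deletion half, fix an arbitrary vertex $y\in V(G_S)$. I would verify that
$$
(G\setminus y)_S \;=\; (G_S)\setminus y,
$$
which follows immediately from the definitions, once one notes that $y\notin S\cup N_G(S)$ (because $y\in V(G_S)$) and hence $N_{G\setminus y}(S)=N_G(S)$. Since $G\in W_2$, the graph $G\setminus y$ is well-covered with $\alpha(G\setminus y)=\alpha(G)$. Moreover, $S$ is still an independent set of $G\setminus y$ because $y\notin S$. Applying Lemma \ref{LC} to $G\setminus y$ and $S$, we obtain that $(G\setminus y)_S$ is well-covered with
$$
\alpha\bigl((G\setminus y)_S\bigr)\;=\;\alpha(G\setminus y)-|S|\;=\;\alpha(G)-|S|\;=\;\alpha(G_S).
$$
In view of the equality of graphs above, this says precisely that $(G_S)\setminus y$ is well-covered and has the same independence number as $G_S$, for every $y\in V(G_S)$. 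Thus $G_S\in W_2$.

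There is not really a serious obstacle here; the only points requiring care are (i) checking that $S$ remains independent in $G\setminus y$ (trivial, since $y\notin S$) and (ii) the set-theoretic identity $(G\setminus y)_S=(G_S)\setminus y$, which hinges on $y$ lying outside $S\cup N_G(S)$. The role of the hypothesis $|S|<\alpha(G)$ is simply to ensure $G_S$ has a vertex to delete, so that the $W_2$ condition is not vacuous.
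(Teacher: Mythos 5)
Your proof is correct, but it takes a genuinely different route from the paper's. The paper disposes of this lemma in one sentence: induction on $|S|$, with the single-vertex step (that $G_x\in W_2$ whenever $G\in W_2$) outsourced to Pinter's Theorem~5. You instead verify the defining conditions of $W_2$ for $G_S$ directly and all at once, using only Lemma \ref{LC}: once applied to $G$, giving that $G_S$ is well-covered with $\alpha(G_S)=\alpha(G)-|S|$, and once applied to each $G\setminus y$ for $y\in V(G_S)$ (legitimate because $G\in W_2$ makes $G\setminus y$ well-covered with unchanged independence number, and $S$ stays independent there), combined with the commutation identity $(G\setminus y)_S=(G_S)\setminus y$, which you correctly justify from $y\notin S\cup N_G(S)$. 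This is self-contained, avoids the external citation and the induction, and as a bonus it automatically shows $G_S$ has no isolated vertices (an isolated $y$ would force $\alpha((G_S)\setminus y)=\alpha(G_S)-1$, contradicting the equality you derive), which is exactly the consequence of $W_2$-membership the paper later invokes in Lemma \ref{ML}. Your reading of the hypothesis $|S|<\alpha(G)$ as merely ensuring $V(G_S)\neq\emptyset$ matches how the paper uses it.
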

\begin{proof} The proof follows immediately by induction on $|S|$ and using \cite[Theorem $5$]{Pi}.
\end{proof}

We say that $G$ is locally Cohen-Macaulay (resp. Gorenstein) if $G_x$ is Cohen-Macaulay (resp. Gorenstein) for all vertices $x$.

\begin{lem} \label{ML} Let $G$ be a locally Gorenstein graph in $W_2$ and let $S$ be a nonempty independent set of $G$. Then we have $G_S$ is Gorenstein and $\Delta(G_S)$ is Eulerian with $\dim(\Delta(G_S))=\dim (\Delta(G))-|S|$.
\end{lem}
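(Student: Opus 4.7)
The plan is to proceed by induction on $|S|$.

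\textbf{Base case, $|S|=1$.} Write $S=\{x\}$. By hypothesis $G_x$ is Gorenstein. If $|S|=\alpha(G)$, then $V(G_x)=\emptyset$ and $\Delta(G_x)=\{\emptyset\}$ is trivially Eulerian and Gorenstein of dimension $-1=\dim\Delta(G)-1$, so the claim is immediate. Otherwise $|S|<\alpha(G)$ and Lemma \ref{WLC} gives $G_x\in W_2$. I would then show $G_x$ has no isolated vertex: an isolated vertex $v$ in $G_x$ would force $\alpha(G_x\setminus v)=\alpha(G_x)-1$, contradicting the $W_2$ equality $\alpha(G_x\setminus v)=\alpha(G_x)$. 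Hence $\Delta(G_x)=\core(\Delta(G_x))$, so Lemma \ref{Stanley} yields that $\Delta(G_x)$ is Eulerian; the dimension identity $\dim\Delta(G_x)=\alpha(G_x)-1=\dim\Delta(G)-1$ follows from Lemma \ref{LC}.

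\textbf{Inductive step, $|S|\geq 2$.} Pick any $x\in S$ and set $S':=S\setminus\{x\}$. By induction, $G_{S'}$ is Gorenstein with $\Delta(G_{S'})$ Eulerian of dimension $\dim\Delta(G)-|S'|$. Since $S$ is independent, $x\notin S'\cup N_G(S')$, so $x\in V(G_{S'})$ and $G_S=(G_{S'})_x$, giving
\[
\Delta(G_S)=\lk_{\Delta(G_{S'})}(x).
\]
The standard identity $\lk_{\lk_\Gamma(F)}(F')=\lk_\Gamma(F\cup F')$, together with purity of $\Delta(G_{S'})$, shows that the link of an Eulerian complex is again Eulerian, so $\Delta(G_S)$ is Eulerian. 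Lemma \ref{Stanley} says $\Delta(G_{S'})$ is Cohen-Macaulay, hence by Lemma \ref{Reisner} its link $\Delta(G_S)$ is also Cohen-Macaulay. Moreover $\Delta(G_S)$ cannot be a cone, because an Eulerian complex of dimension $\geq 0$ satisfies $\k=\pm 1\neq 0$ whereas cones have $\k=0$ (and the dimension $-1$ case is trivial). A second application of Lemma \ref{Stanley} concludes that $\Delta(G_S)$ is Gorenstein, and the dimension drops by one: $\dim\Delta(G_S)=\dim\Delta(G_{S'})-1=\dim\Delta(G)-|S|$.

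\textbf{Main obstacle.} The crucial use of the $W_2$ hypothesis, as opposed to mere well-coveredness of $G$, appears in the base case, where it rules out an isolated vertex in $G_x$ and thereby ensures $\Delta(G_x)=\core(\Delta(G_x))$. Without this, the Gorenstein complex $\Delta(G_x)$ could be a cone and fail to be Eulerian, and the inductive mechanism --- which repeatedly invokes ``link of Eulerian is Eulerian'' and ``Eulerian implies non-cone'' to upgrade Cohen-Macaulayness of the link to Gorensteinness --- would break down.
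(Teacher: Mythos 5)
Your proof is correct, but it is organized differently from the paper's. The paper gives a direct (non-inductive) argument: $G_S$ is Gorenstein because localizations of Gorenstein graphs are Gorenstein and $G$ is locally Gorenstein; then it shows $G_S$ has no isolated vertices by splitting into the cases $|S|<\alpha(G)$ (where Lemma \ref{WLC} puts $G_S$ itself in $W_2$, and members of $W_2$ have no isolated vertices) and $|S|=\alpha(G)$ (where $G_S$ is empty); this gives $\Delta(G_S)=\core(\Delta(G_S))$, so Stanley's criterion yields the Eulerian property, and Lemma \ref{LC} gives the dimension. Your base case is essentially the paper's argument specialized to $|S|=1$, but your inductive step is genuinely different: instead of invoking Lemma \ref{WLC} for all of $S$, you pass to links and use that links of pure Eulerian (resp. Cohen--Macaulay) complexes are Eulerian (resp. Cohen--Macaulay) and that an Eulerian complex cannot be a cone since $\k\neq 0$, then reassemble Gorensteinness via Lemma \ref{Stanley}. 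This buys a purely simplicial inductive step and makes explicit that the $W_2$ hypothesis is only needed to rule out a cone at the first localization; the paper's route is shorter because it delegates exactly that work to Lemma \ref{WLC} uniformly in $|S|$. Both arguments are sound; just make sure you record (as you implicitly do via purity) that facets of $\lk_{\Delta}(x)$ come from facets of $\Delta$ containing $x$, so the link of a pure complex is pure and its dimension drops by exactly one.
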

\begin{proof} We have $G_S$ is Gorenstein since $G$ is locally Gorenstein and $S$ is nonempty. We now claim that $G_S$ has no isolated vertices. Indeed, if $|S| <\alpha(G)$ then $G_S$ is in $W_2$, so by definition of the class $W_2$ we imply that $G_S$ has no isolated vertices. If $|S| = \alpha(G)$, then $S$ is a maximal independent set of $G$, and then $G_S$ is the empty graph. Therefore, $G_S$ also has no isolated vertices, as claimed. From this claim we have $\Delta(G_S)=\core(\Delta(G_S))$, so $\Delta(G_S)$ is Eulerian by Lemma $\ref{Stanley}$. Finally, by Lemma $\ref{LC}$ we have $\alpha(G_S)=\alpha(G)-|S|$, hence $\dim(\Delta(G_S)) = \alpha(G_S)-1=\alpha(G)-1 - |S| =\dim(\Delta(G))-|S|$, as required.
\end{proof}

We next show that the class $W_2$ indeed contains all Gorenstein graphs without isolated vertices.

\begin{lem} \label{GW} If $G$ is a Gorenstein graph without isolated vertices, then $G$ is in $W_2$.
\end{lem}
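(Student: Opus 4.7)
The plan is to combine the fact that no isolated vertices forces $\Delta(G)=\core(\Delta(G))$ with the doubly Cohen-Macaulay property (Lemma \ref{G2CM}), and then translate ``Cohen-Macaulay of full dimension'' into ``well-covered with unchanged independence number.''

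First I would observe that since $G$ has no isolated vertices, every vertex $x$ of $G$ has a neighbor, so there is some independent set (e.g.\ $\{y\}$ for a neighbor $y$ of $x$) that cannot be enlarged by $x$; equivalently $\st_{\Delta(G)}(x)\ne \Delta(G)$. Thus $\core(V(\Delta(G)))=V(G)$ and $\Delta(G)=\core(\Delta(G))$. Since $G$ is Gorenstein, it is in particular Cohen-Macaulay, hence $\Delta(G)$ is pure (as recalled in the introduction, Cohen-Macaulay implies well-covered), so $G$ is well-covered.

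Next I would fix an arbitrary vertex $x\in V(G)$ and show that $G\setminus x$ is well-covered with $\alpha(G\setminus x)=\alpha(G)$. By Lemma \ref{G2CM}, $\Delta(G)$ is doubly Cohen-Macaulay, so $\Delta(G)\setminus x$ is Cohen-Macaulay of the same dimension as $\Delta(G)$. The key identification is
\[
\Delta(G)\setminus x \;=\; \{F\in \Delta(G)\mid x\notin F\}\;=\;\Delta(G\setminus x),
\]
because the independent sets of $G\setminus x$ are exactly the independent sets of $G$ avoiding $x$. Therefore $\Delta(G\setminus x)$ is Cohen-Macaulay of dimension $\dim\Delta(G)=\alpha(G)-1$. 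Since a Cohen-Macaulay simplicial complex is pure, $G\setminus x$ is well-covered, and its independence number equals $\dim\Delta(G\setminus x)+1=\alpha(G)$. This is exactly the condition defining $W_2$.

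There is essentially no hard step here; everything has been set up by Lemma \ref{G2CM} and the translation dictionary $\Delta(G)\leftrightarrow G$. The only point that needs a moment's care is the identification $\Delta(G)\setminus x=\Delta(G\setminus x)$, which requires one to distinguish the deletion $\Delta\setminus x=\{F\in\Delta\mid x\notin F\}$ (used in the doubly Cohen-Macaulay definition) from the link, and to verify this matches the independence complex of the vertex-deleted graph. Once this is in place, the conclusion $G\in W_2$ is immediate from the definition.
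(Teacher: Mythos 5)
Your proof is correct and follows essentially the same route as the paper: use the absence of isolated vertices to get $\Delta(G)=\core(\Delta(G))$, invoke Lemma \ref{G2CM} to conclude $\Delta(G)$ is doubly Cohen-Macaulay, and identify $\Delta(G)\setminus x$ with $\Delta(G\setminus x)$ to read off that $G\setminus x$ is well-covered with $\alpha(G\setminus x)=\alpha(G)$. The extra details you supply (why $\st_{\Delta(G)}(x)\ne\Delta(G)$ and the purity argument) are exactly the points the paper leaves implicit.
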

\begin{proof} Since $G$ is Gorenstein, it is well-covered. So it remains to prove that for any vertex $x$, we have $G\setminus x$ is well-covered and $\alpha(G)=\alpha(G\setminus x)$. As $G$ has no isolated vertices, $\core(\Delta(G)) = \Delta(G)$. Hence, by Lemma $\ref{G2CM}$ we have $\Delta(G)$ is doubly Cohen-Macaulay. It follows that $\Delta(G\setminus x) = \Delta(G)\setminus x$ is Cohen-Macaulay and $\dim(\Delta(G\setminus x)) = \dim(\Delta(G))$. In other words, $G\setminus x$ is Cohen-Macaulay with $\alpha(G\setminus x)=\alpha(G)$. Thus, $G$ is in $W_2$, as required.
\end{proof}

In general, members of $W_2$ need not be Gorenstein and an example is the $3$-cycle, but any triangle-free member of $W_2$ is Gorenstein. In order to prove this, we first show that the independence complex of a triangle-free member of $W_2$ is Eulerian.

\begin{lem} \label{EulerCh} If $G$ be a triangle-free graph in $W_2$, then $\Delta(G)$ is Eulerian.
\end{lem}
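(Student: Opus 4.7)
The plan is to prove this by strong induction on $|V(G)|$. Since $G$ is well-covered, $\Delta(G)$ is pure of dimension $\alpha(G) - 1$, so it suffices to check the Eulerian condition $\k(\lk_{\Delta(G)} F) = (-1)^{\dim \lk_{\Delta(G)} F}$ for every face $F \in \Delta(G)$. For $|F| = \alpha(G)$ this is trivial, since $\lk_{\Delta(G)} F = \{\emptyset\}$. For $0 < |F| < \alpha(G)$, Lemma~\ref{WLC} says that $G_F$ lies in $W_2$; since $G_F$ is triangle-free (being an induced subgraph of $G$) and has strictly fewer vertices than $G$, the inductive hypothesis yields that $\Delta(G_F) = \lk_{\Delta(G)} F$ is Eulerian, so in particular $\k(\Delta(G_F)) = (-1)^{\alpha(G) - |F| - 1}$, as required.

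The remaining case $F = \emptyset$ is the heart of the proof: one must show $\k(\Delta(G)) = (-1)^{\alpha(G)-1}$. Fix any vertex $x$ and apply the deletion-link identity
\[
\k(\Delta(G)) = \k(\Delta(G\setminus x)) - \k(\lk_{\Delta(G)} x) = \k(\Delta(G\setminus x)) - \k(\Delta(G_x)),
\]
where $\k(\Delta(G_x)) = (-1)^{\alpha(G)-2}$ by the previous step with $F = \{x\}$. The task therefore reduces to exhibiting a vertex $x$ with $\k(\Delta(G\setminus x)) = 0$. The easy subcase is when $G$ has a leaf $y$: taking $x$ to be its unique neighbor makes $y$ isolated in $G\setminus x$, so $\Delta(G\setminus x)$ is a cone with apex $y$, and consequently $\k(\Delta(G\setminus x)) = 0$.

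The main obstacle is the leafless subcase, typified by $G = C_5$, where no single vertex deletion directly produces a cone. Here the plan is to iterate the deletion-link identity: pick a neighbor $y$ of $x$, and note
\[
\k(\Delta(G\setminus x)) = \k(\Delta(G\setminus\{x,y\})) - \k(\Delta(G_y)),
\]
where again $\k(\Delta(G_y)) = (-1)^{\alpha(G)-2}$ by induction. Triangle-freeness ensures that any $z \in N_G(y)\setminus\{x\}$ is non-adjacent to $x$, so further peeling may expose new (pseudo-)isolated vertices. The technical heart of the argument is to control this iteration via the structural properties of triangle-free $W_2$-graphs, in particular Pinter's theorem invoked in Lemma~\ref{WLC} (cf.\ also \cite{FHN}), so that the process ultimately terminates at a cone and closes the induction. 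This structural/combinatorial step is where I expect the real difficulty to lie.
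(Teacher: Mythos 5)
Your reduction is sound as far as it goes: the purity of $\Delta(G)$, the treatment of the nonempty faces via Lemma~\ref{WLC} and induction, and the deletion--link identity $\k(\Delta(G)) = \k(\Delta(G\setminus x)) - \k(\Delta(G_x))$ are all correct, and you have correctly isolated the crux, namely showing $\k(\Delta(G)) = (-1)^{\alpha(G)-1}$. But that crux is exactly what you do not prove. You handle only the case where $G$ has a leaf, and for the leafless case you offer an iteration of the deletion--link identity whose target keeps shifting (after one step you need $\k(\Delta(G\setminus\{x,y\})) = (-1)^{\alpha(G)-2}$, and $G\setminus\{x,y\}$ need not be well-covered or in $W_2$, so no inductive control is available) together with the hope that "further peeling" eventually exposes a cone. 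There is no argument that this process terminates or that the intermediate Euler characteristics can be computed, and you say yourself that this is where the difficulty lies. So the proposal has a genuine gap at the heart of the proof.

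The paper closes this gap with a different decomposition that you may find instructive. Fix a vertex $a$ and set $A := N_G(a)$, which is a nonempty \emph{independent} set by triangle-freeness, hence a face of $\Delta := \Delta(G)$. Rather than deleting one vertex, one splits $\Delta$ into $\st_\Delta(a)$ (the faces disjoint from $A$, contributing $0$ to $\k(\Delta)$ since the star is a cone) and the faces meeting $A$. The latter are grouped by their intersection $S$ with $A$; for each nonempty $S\subseteq A$ the quantity $g(S) = \sum_{F\supseteq S}(-1)^{|F|-1}$ equals $(-1)^{|S|}\k(\Delta(G_S)) = (-1)^{d-1}$ by the inductive hypothesis applied to the localization $G_S$ (here is where independence of $A$ is essential: every nonempty $S\subseteq A$ is a face, so Lemma~\ref{ML}/induction applies to it). M\"obius inversion over the Boolean lattice of nonempty subsets of $A$ then recovers $\sum_{F\cap A = S}(-1)^{|F|-1}$ from the $g(S)$, and summing over $S$ collapses to the single term $S=A$, giving $\k(\Delta) = (-1)^{d-1}$. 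In effect, the paper removes the whole neighborhood $A$ at once (so the complement is a cone and contributes nothing) and controls the removed part by inclusion--exclusion over subsets of $A$ --- precisely the bookkeeping your one-vertex-at-a-time peeling cannot do. To complete your proof you would need to supply this (or an equivalent) computation of $\k(\Delta(G\setminus x))$ for leafless $G$.
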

\begin{proof} We prove the lemma by induction on $\alpha(G)$. If $\alpha(G)=1$, then $G$ is complete. Since $G$ is a triangle-free graph without isolated vertices, $G$ must be an edge. It follows that $\Delta(G)$ is Eulerian.

Assume that $\alpha(G)\geqslant 2$. Let $\Delta := \Delta(G)$ and $d :=\dim \Delta(G)+1$. For each vertex $x$ of $G$, by Lemma $\ref{WLC}$ we have $G_x$ is in $W_2$. Since $\lk_{\Delta}(x) = \Delta(G_x)$ and $\alpha(G_x)=\alpha(G)-1$, by the induction hypothesis we have $\lk_{\Delta}(x)$ is Eulerian. So in order to prove $\Delta$ is Eulerian it remains to show that $\k(\Delta)=(-1)^{d-1}$.

Let $a$ be a vertex of $G$, and let $A := N_G(a)$; so that $A$ is a nonempty independent set of $G$ because $G$ is triangle-free.

Let
$$\Gamma := \{F\in \Delta \mid F\cap A \ne\emptyset\},$$
so that $\Delta$ can be partitioned into $\Delta= \st_{\Delta}(a) \cup  \Gamma$.  Note that $\k(\st_{\Delta}(a))=0$ as $\st_{\Delta}(a)$ is a cone over $a$. Thus, 
\begin{align*}
\k(\Delta) = \sum_{F\in\Delta} (-1)^{|F|-1} &= \sum_{F\in\st_{\Delta}(a)} (-1)^{|F|-1}+\sum_{F\in\Gamma} (-1)^{|F|-1}\\
&= \k(\st_{\Delta}(a)) + \sum_{F\in\Gamma} (-1)^{|F|-1} = \sum_{F\in\Gamma} (-1)^{|F|-1}.
\end{align*}
Let $\Lambda := \left<A\right>$ be the simplex over the vertices in $A$, and let $\Omega := \Lambda \setminus \{\emptyset\}$. For each $S\in\Omega$, we define
$$g(S):=\sum_{F\in\Gamma, S \subseteq F} (-1)^{|F|-1} \ \text{ and } \tau(S):=\sum_{F\in\Gamma, F \cap A = S} (-1)^{|F|-1}.$$
Then, \begin{equation}\label{F1} \k(\Delta) = \sum_{S\in\Omega}\tau(S) \ \text{ and } \ g(S) = \sum_{F\in \Omega, S\subseteq F} \tau(F).\end{equation}
For every $S\in \Omega$, since $S$ is a nonempty face of $\Delta$, we have $\Delta(G_S)$ is Eulerian with $\dim (\Delta(G_S)) =\dim (\Delta(G))-|S| = d-1 -|S|$. It follows that $\k(\Delta(G_S)) = (-1)^{d -1-|S|}$. Therefore,
\begin{align*}g(S) &= \sum_{F\in\Gamma, S \subseteq F} (-1)^{|F|-1} =\sum_{F\in\Delta, S \subseteq F} (-1)^{|F|-1}= \sum_{F\in\Delta(G_S)} (-1)^{|F|+|S|-1} \\
&= (-1)^{|S|} \sum_{F\in\Delta(G_S)} (-1)^{|F|-1} = (-1)^{|S|} \k(G_S) = (-1)^{|S|} (-1)^{d-1-|S|} =  (-1)^{d-1}.
\end{align*}
We consider $\Omega$  as a poset with the partial order $\leqslant$ being inclusion. Then, by $(\ref{F1})$ we have
$$g(S) = \sum_{F\in \Omega, F\geqslant S} \tau(F).$$
Let $\mu$ be the Mobius function of the poset $\Omega$. By the Mobius inversion formula (see \cite[Proposition $3.7.2$]{S1}) we have
$$\tau(S) = \sum_{F\in \Omega, F\geqslant S} \mu(S,F)g(F) = \sum_{F\in \Omega, F\geqslant S} \mu(S,F) (-1)^{d-1} = (-1)^{d-1} \sum_{F\in \Omega, F\geqslant S} \mu(S,F).$$
Observe that for $S,F\in\Omega$ with $S\subseteq F$, we have $T\in \Omega$ and $S\leqslant T\leqslant F$ if and only if $S \subseteq T \subseteq F$, so $\mu(S,F) = (-1)^{|F| - |S|}$. Hence,
\begin{align*}\tau(S) &= (-1)^{d-1} \sum_{F\in \Omega, F\geqslant S} (-1)^{|F|-|S|} = (-1)^{d-1} \sum_{F\in \Lambda, S \subseteq F} (-1)^{|F|-|S|}\\
&=-(-1)^{d-1} \sum_{F\in \lk_{\Lambda}(S)} (-1)^{|F|-1} = -(-1)^{d-1}\k(\lk_{\Lambda}(S)).
\end{align*}
Thus, by $(\ref{F1})$ we get
$$ \k(\Delta) = \sum_{S\in\Omega}\tau(S) = \sum_{S\in\Lambda, S\ne\emptyset}\tau(S) =-(-1)^{d-1} \sum_{S\in\Lambda, S\ne\emptyset}\k(\lk_{\Lambda}(S)).$$
On the other hand, if $S\in\Lambda$ and $S\ne A$, then $\lk_{\Lambda}(S) =\left<A\setminus S\right>$, which implies $\k(\lk_{\Lambda}(S))=0$. Together with the equality above, this fact yields
$$\k(\Delta) = -(-1)^{d-1} \k(\lk_{\Lambda}(A))=-(-1)^{d-1} \k(\{\emptyset\}) = -(-1)^{d-1}(-1) = (-1)^{d-1},$$
as required.
\end{proof}

In what follows we use the exterior product to describe the homology groups of simplicial complexes. Assume that $\Delta$ is a simplicial complex over the vertex set $[n]$. Then the exterior products $\e_F = e_{j_0} \wedge \cdots \wedge e_{j_i}$, where $F = \{j_0,\ldots,j_i\}\in \Delta$ with $j_0 < \cdots < j_i$, form a basis of $\mathcal {\widetilde C}_i(\Delta;k)$.  The differentials $\partial_i: \mathcal {\widetilde C}_i(\Delta;k) \ \longrightarrow  \mathcal {\widetilde C}_{i-1}(\Delta;k)$ of the reduced chain complex $\mathcal {\widetilde C}_{\bullet}(\Delta;k)$ are given by
$$ \partial_i(\ e_{j_0}\wedge \cdots\wedge e_{j_i}) = \sum_{s=0}^i (-1)^se_{j_0} \wedge \cdots \wedge \widehat{e}_{j_s} \wedge \cdots \wedge e_{j_i},$$
and the $i$th homology group of $\Delta$ is $\h_i(\Delta;k)=\ker(\partial_{i})/ \im(\partial_{i+1})$. For simplicity, if $\omega\in \mathcal {\widetilde C}_i(\Delta;k)$, we write $\partial \omega$ instead of $\partial_i \omega$.  With this notation we have
\begin{equation}\label{DF}\partial  (\omega \wedge \nu) =\partial  \omega \wedge \nu + (-1)^{i+1} \omega \wedge \partial  \nu \ \text{ for all } \omega \in
\mathcal {\widetilde C}_i(\Delta;k) \text{ and } \nu \in \mathcal {\widetilde C}_j(\Delta;k).\end{equation}	

\smallskip

In the sequel we also need a concrete representation for elements of $\mathcal {\widetilde C}_i(\Delta;k)$. Let $\omega \in \mathcal {\widetilde C}_i(\Delta;k)$. Then, $\omega = \sum_{F\in\mathcal F_i(\Delta)}\lambda_F \e_F$, where $\mathcal F_i(\Delta) =\{F\in\Delta \mid \dim F=i\}$ and $\lambda_F \in k$ for all $F$. Note that for two faces $V$ and $F$ of $\Delta$ with $V\subseteq F$, by the alternative property of the wedge product we can write $\e_F = \e_V \wedge a_{V,F} \e_{F\setminus V}$ where $a_{V,F} = -1$ or $a_{V,F}=1$. Let $A$ be a face of $\Delta$. Then, we can rewrite $\omega$ as
$$\omega = \sum_{V\subseteq A} \left(\sum_{F\in \mathcal F_i(\Delta), \ F \cap A = V} \lambda_F \e_{F}\right) = \sum_{V\subseteq A} \e_V\wedge \left(\sum_{F\in \mathcal F_i(\Delta), \ F \cap A = V} \lambda_F a_{V,F} \e_{F\setminus V}\right).
$$
For each $V\subseteq A$, let $$\omega_V := \sum_{F\in \mathcal F_i(\Delta), \ F \cap A = V} \lambda_F a_{V,F} \e_{F\setminus V}$$ so that
$\omega_V\in \mathcal {\widetilde C}_{i-|V|}(\lk_{\Delta}(V)\setminus A;k)$. Then,
\begin{equation}\label{representation}\omega =  \sum_{V\subseteq A} \e_V\wedge \omega_V.
\end{equation}

\smallskip

We are now ready to prove the main result of this section.

\begin{prop}\label{W2} If $G$ is a triangle-free graph without isolated vertices, then $G$ is Gorenstein if and only if $G$ is in $W_2$.
\end{prop}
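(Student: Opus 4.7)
The backward direction (Gorenstein implies $W_2$) is already proved as Lemma \ref{GW}, so I plan to focus on the forward direction: a triangle-free graph $G\in W_2$ without isolated vertices is Gorenstein. I would argue by induction on $\alpha(G)$. The base case $\alpha(G)=1$ forces $G=K_2$ (since $G$ is triangle-free without isolated vertices), and $\Delta(G)$ is two disjoint points, manifestly Gorenstein. For the inductive step, Lemma \ref{EulerCh} already tells me $\Delta:=\Delta(G)$ is Eulerian, and $\Delta=\core(\Delta)$ since $G$ has no isolated vertices, so by Lemma \ref{Stanley} everything reduces to showing $\Delta$ is Cohen-Macaulay. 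Reisner's criterion requires $\h_i(\lk_\Delta F;k)=0$ for $i<\dim\lk_\Delta F$. For $F\ne\emptyset$ the link equals $\Delta(G_F)$ with $G_F$ still triangle-free and in $W_2$ (Lemma \ref{WLC}) and of strictly smaller $\alpha$, so the induction hypothesis handles it. The whole proof thus reduces to the single vanishing $\h_i(\Delta;k)=0$ for $i<d-1$, where $d:=\alpha(G)$.

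For this, I would fix a cycle $\omega$ of degree $i<d-1$, choose any vertex $a$, and set $A:=N_G(a)$; triangle-freeness makes $A$ independent. Using the representation formula (\ref{representation}), decompose $\omega=\sum_{V\subseteq A}\e_V\wedge\omega_V$ with each $\omega_V$ a chain in $\lk_\Delta(V)\setminus A$. The geometric key is that $\Delta\setminus A$ is a cone over $a$: indeed, any $F\in\Delta$ with $F\cap A=\emptyset$ satisfies $F\cup\{a\}\in\Delta$ because $N_G(a)\subseteq A$. So if I can reduce $\omega$ modulo boundaries to its $V=\emptyset$ component, that component is automatically a boundary in the contractible complex $\Delta\setminus A$.

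I would carry out this reduction by a downward induction on the poset of subsets of $A$. Pick $V^*\neq\emptyset$ maximal with $\omega_{V^*}\neq 0$. Maximality ensures that no term $\partial\e_V\wedge\omega_V$ with $V\supsetneq V^*$ contributes to the $V^*$-part of $\partial\omega=0$, so collecting the $V^*$-part yields $\partial\omega_{V^*}=0$ inside $\lk_\Delta(V^*)\setminus A$. By the induction hypothesis applied to $G_{V^*}$, the complex $\lk_\Delta(V^*)$ is Gorenstein with core equal to itself. If $V^*\subsetneq A$, then $A^*:=A\setminus V^*$ is a nonempty face of $\lk_\Delta(V^*)$ (being independent in $G_{V^*}$), so $(\lk_\Delta V^*)_{A^*}$ is a simplex, and Lemma \ref{double} delivers $\h_*(\lk_\Delta(V^*)\setminus A^*;k)=0$. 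If instead $V^*=A$ (which forces $|A|<d$, because $|A|=d$ would collapse $\omega_A$ to zero on degree grounds), Cohen-Macaulayness of $\lk_\Delta A$ together with the bound $i-|A|<d-1-|A|$ accomplishes the same thing. Either way I obtain $\sigma$ with $\partial\sigma=\omega_{V^*}$; replacing $\omega$ by $\omega-(-1)^{|V^*|}\partial(\e_{V^*}\wedge\sigma)$ then kills the $V^*$-component while only perturbing components indexed by proper subsets of $V^*$ (concretely, by $V^*\setminus\{v\}$ for $v\in V^*$). Iterating eventually leaves $\omega=\omega_\emptyset$, which is a boundary in $\Delta\setminus A$.

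The main technical hurdle will be the careful bookkeeping in the reduction step: one must verify that in the expansion of $\partial\omega$ the $V^*$-component truly isolates $\partial\omega_{V^*}$ (using maximality of $V^*$), and that the subtraction $\omega-(-1)^{|V^*|}\partial(\e_{V^*}\wedge\sigma)$ annihilates the $V^*$-piece exactly while leaving components indexed by sets $V\supsetneq V^*$ unchanged. Once this is in place, termination is automatic since each step strictly decreases the number of nonzero $\omega_V$ of maximal size, and hence eventually the maximal size itself.
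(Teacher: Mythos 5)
Your proposal is correct and follows essentially the same route as the paper: reduce via Lemma \ref{EulerCh}, Lemma \ref{Stanley}, and the induction hypothesis on links to the single vanishing $\h_i(\Delta;k)=\zv$ for $i<\dim\Delta$, then decompose a cycle along the independent neighborhood $A=N_G(a)$ using (\ref{representation}), kill the maximal nonzero component $\omega_{V^*}$ by invoking Lemma \ref{double} (for $V^*\subsetneq A$) or Reisner's criterion on the Gorenstein link (for $V^*=A$), and finish in the cone $\st_\Delta(a)=\Delta\setminus A$. The bookkeeping you flag as the main hurdle is exactly the computation carried out in the paper's proof, with the same sign conventions and the same termination argument.
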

\begin{proof} If $G$ is Gorenstein, then $G$ is in $W_2$ by Lemma $\ref{GW}$.

Conversely, we prove that $G$ is Gorenstein when $G\in W_2$ by induction on $\alpha(G)$. If $\alpha(G)=1$, then $G$ is a complete triangle-free graph. As $G$ is in $W_2$, it has no isolated vertices, so $G$ is an edge. Thus, $G$ is Gorenstein.

Assume that $\alpha(G)\geqslant 2$. Let $\Delta :=\Delta(G)$. By Lemma $\ref{EulerCh}$ we have $\Delta$ is Eulerian. For each vertex $x$ of $G$, by Lemma $\ref{WLC}$ we have $G_x$ is in $W_2$. Since $\alpha(G_x) = \alpha(G)-1$, by the induction hypothesis $G_x$ is Gorenstein. So, $G$ is locally Gorenstein.

Since $\Delta$ is Eulerian, by Lemma $\ref{Stanley}$ we have $\Delta$ is Gorenstein whenever $\Delta$ is Cohen-Macaulay. As $G$ is well-covered and locally Gorenstein, by Reisner's criterion, $G$ is Cohen-Macaulay if
$$\h_i(\Delta;k)= \zv,\ \text{ for all } i <\dim(\Delta).$$
Let $a$ be any vertex of $G$ and let $A:=N_G(a)$. Notice that $A$ is a nonempty independent set of $G$ since $G$ is a triangle-free member of $W_2$.

Given any $i$ with $i< \dim (\Delta)$, in order to prove $\h_i(\Delta;k)=\zv$ we will prove that for every $\omega\in\mathcal {\widetilde C}_i(\Delta;k)$ such that $\partial \omega=0$, there is $\zeta\in \mathcal {\widetilde C}_{i+1}(\Delta;k)$ such that $\omega = \partial \zeta$.
Indeed, by Equation $(\ref{representation})$ we write $\omega$ as
$$\omega = \sum_{V\subseteq A}\e_V\wedge \omega_V$$
where $\omega_V\in \mathcal {\widetilde C}_{i-|V|}(\Delta(G_V\setminus A);k)$.

If $\omega_F = 0$ for any $\emptyset\ne F\subseteq A$, then $\omega \in \mathcal {\widetilde C}_i(\Delta(G\setminus A);k) = \mathcal {\widetilde C}_i(\st_{\Delta}(a);k)$. Note that $\h_i(\st_{\Delta}(a);k) = \zv$ because $\st_{\Delta}(a)$ is a cone over $a$, so $\omega = \partial \zeta$ for some $\zeta\in \mathcal {\widetilde C}_{i+1}(\st_{\Delta}(a);k) \subseteq \mathcal {\widetilde C}_{i+1}(\Delta;k)$.

Assume that $\omega_F\ne 0$ for some $\emptyset\ne F\subseteq A$. Take such an $F$  such that $|F|$ is maximal. By Equation $(\ref{DF}$) we have
$$\partial \omega = \sum_{V\subseteq A}\left(\partial \e_V\wedge \omega_V  + (-1)^{|V|}\e_V\wedge \partial\omega_V \right)= 0,$$
which implies $\partial \omega_F=0$.

We now claim that $\h_{i-|F|}(\Delta(G_F\setminus A);k) =\zv$. Indeed,  if $F=A$, then $\Delta(G_F\setminus A) =\Delta(G_F)$ is Gorenstein with $\dim (\Delta(G_F)) = \dim(\Delta)-|F|$ by Lemma $\ref{ML}$. Note that $i-|F| <\dim(\Delta)-|F| =\dim(\Delta(G_F\setminus A))$, so by Reisner's criterion we have $\h_{i-|F|}(\Delta(G_F\setminus A);k)=\zv$.

If $F$ is a proper subset of $A$, then $G_F\setminus A = G_F\setminus (A\setminus F)$ and $G_F$ is a Gorenstein with $\Delta(G_F)=\core (\Delta(G_F))$ by Lemma $\ref{ML}$. Since $A\setminus F$ is an independent set of $G_F$, by Lemma $\ref{double}$ we have $\h_{i-|F|}(\Delta(G_F\setminus A);k)=\zv$, as claimed.

By our claim we have $\h_{i-|F|}(\Delta(G_F\setminus A);k) =\zv$, hence there is $\eta_F \in \mathcal {\widetilde C}_{i-|F|+1}(\Delta(G_F\setminus A); k)$ such that $\omega_F =\partial( \eta_F)$. Write $F=\{a_1,\ldots,a_s\}$ where $a_1<\cdots<a_s$. Then,
$$\partial \e_F = \sum_{i=1}^s(-1)^{i-1}e_{a_1} \wedge \cdots \wedge \widehat{e}_{a_i} \wedge \cdots \wedge e_{a_s} = \sum_{i=1}^s (-1)^{i-1}\e_{F\setminus\{a_i\}}.$$
Since
$$\partial(\e_F\wedge \eta_F) = \partial \e_F \wedge \eta_F + (-1)^{|F|}\e_F \wedge \partial \eta_F = \partial \e_F \wedge \eta_F + (-1)^{|F|}\e_F \wedge \omega_F,$$
we have
$$\omega -\partial((-1)^{|F|}\e_F\wedge \eta_F) = \sum_{i=1}^s \e_{F\setminus\{a_i\}}  \wedge ((-1)^{|F|+i} \eta_F)+\sum_{V\subseteq A, V\ne F}\e_V\wedge \omega_V.$$
Note that $(-1)^{|F|}\e_F\wedge \eta_F \in \mathcal {\widetilde C}_{i+1}(\Delta;k)$. By repeating this process after finitely many steps, we get an element $\eta\in \mathcal {\widetilde C}_{i+1}(\Delta;k)$ such that
$$\omega - \partial \eta \in \mathcal {\widetilde C}_i(\Delta(G\setminus A);k) = \mathcal {\widetilde C}_i(\st_{\Delta}(a);k).$$
Since $\h_i(\st_{\Delta}(a);k) = \zv$ and $\partial (\omega -\partial \eta) = \partial \omega - \partial ^2 \eta = 0$, there is $\xi\in \mathcal {\widetilde C}_{i+1}(\st_{\Delta}(a);k)\subseteq \mathcal {\widetilde C}_{i+1}(\Delta;k)$ such that
$$\omega - \partial \eta = \partial \xi, \ \text{ i.e., } \  \omega = \partial(\eta +\xi),$$
as required.
\end{proof}


\section{Cohen-Macaulayness of the second power of edge ideals}

In this section we complete the proof of the main result by characterizing graphs $G$ such that $I(G)^2$ is Cohen-Macaulay. If $e$ is an edge of a given graph $G$, we can obtain a new graph by deleting $e$ from $G$ but leaving the vertices and the remaining edges intact. The resulting graph is denoted by $G\setminus  e$. An edge $e$ in $G$ is said to be $\alpha$-critical if $\alpha(G \setminus e) > \alpha(G)$. If every edge of $G$ is $\alpha$-critical, $G$ is called $\alpha$-{\it critical}. Staples \cite{Sp1} proved that every triangle-free member of $W_2$ is $\alpha$-critical. We will use this property to give a characterization of triangle-free members of $W_2$. For an edge $ab$ of $G$, let $G_{ab}$ be the induced subgraph $G\setminus (N_G(a) \cup N_G(b))$ of $G$.

\begin{lem} \label{CM1} Let $G$ be a graph. Then we have:
\begin{enumerate}
\item If $G_x$ is well-covered with $\alpha(G_x)=\alpha(G)-1$ for all vertices $x$, then $G$ is well-covered;
\item If $G_{ab}$ is well-covered with $\alpha(G_{ab})=\alpha(G)-1$ for all edges $ab$, then $G$ is well-covered.
\end{enumerate}
\end{lem}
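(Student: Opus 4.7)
The plan is to prove (1) directly and then reduce (2) to (1) by induction on $\alpha(G)$.

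For (1), I would take a maximal independent set $S$ of $G$ and, assuming $S$ is nonempty, pick $x\in S$ and show that $S\setminus\{x\}$ is a maximal independent set of $G_x$. The only point to check is maximality: for $v\in V(G_x)\setminus(S\setminus\{x\})$ one has $v\neq x$ and $v\notin N_G(x)$, so a neighbor of $v$ in $S$ (which exists because $S$ is maximal in $G$) is forced to lie in $S\setminus\{x\}$. The assumed equality $\alpha(G_x)=\alpha(G)-1$ together with well-coveredness of $G_x$ then gives $|S|=\alpha(G)$.

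For (2), I would induct on $\alpha(G)$, with the cases $\alpha(G)\leqslant 1$ handled directly. In the inductive step, for each vertex $x\in V(G)$ I would establish two facts: (a) $\alpha(G_x)=\alpha(G)-1$ and (b) $G_x$ itself satisfies the hypothesis of (2). Once both hold, the induction hypothesis gives that $G_x$ is well-covered, and then part (1) applied to $G$ finishes the proof. For (a), the upper bound is immediate from $I\cup\{x\}$ being independent in $G$ for any independent $I\subseteq V(G_x)$; for the lower bound I would either note that an isolated $x$ belongs to every maximum independent set of $G$, or, if $x$ has a neighbor $y$, use the inclusion $V(G_{xy})\subseteq V(G_x)$ (valid because $x\in N_G(y)$) to transfer a maximum independent set of $G_{xy}$, which by hypothesis has size $\alpha(G)-1$, into $G_x$. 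For (b), if $ab$ is any edge of $G_x$ then $a,b\in V(G_x)$ forces $xa,xb\notin E(G)$, so $x\in V(G_{ab})$; a direct vertex-set computation then shows $(G_x)_{ab}=(G_{ab})_x$ as induced subgraphs of $G$, both equal to the induced subgraph on $V(G)\setminus(\{x\}\cup N_G(x)\cup N_G(a)\cup N_G(b))$. Since $G_{ab}$ is well-covered by hypothesis, Lemma $\ref{LC}$ applied to the independent singleton $\{x\}\subseteq V(G_{ab})$ yields that this common graph is well-covered with independence number $\alpha(G_{ab})-1=\alpha(G_x)-1$, which is exactly (b).

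The only step requiring actual calculation is the identity $(G_x)_{ab}=(G_{ab})_x$; it is routine set theory but is the hinge of the reduction, and one must carefully exploit the fact that $xa$ and $xb$ are non-edges of $G$ so that $x$ really does sit inside $V(G_{ab})$, which is what makes Lemma $\ref{LC}$ applicable to the singleton $\{x\}$.
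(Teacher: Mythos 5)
Your proposal is correct and follows essentially the same route as the paper: induction on $\alpha(G)$ for part (2), using the identity $(G_x)_{ab}=(G_{ab})_x$ together with Lemma \ref{LC} applied to the singleton $\{x\}$ to verify the inductive hypothesis for $G_x$, and then invoking part (1). The only difference is that you write out the (easy) proof of part (1) and treat isolated vertices explicitly, both of which the paper dismisses as obvious.
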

\begin{proof} $(1)$ is obvious.

We now prove $(2)$ by induction on $\alpha(G)$. If $\alpha(G)=1$, then $G$ is a complete graph, and then $\alpha(G_{ab})=0$ for every edge $ab$. Clearly, $G$ is well-covered.

 Assume that $\alpha(G)\geqslant 2$. We may assume that $G$ has no isolated vertices. Let $xy$ be an edge of $G$. Since $G_{xy}$ is an induced subgraph of $G_x$, we have $\alpha(G)-1 = \alpha(G_{xy}) \leqslant \alpha(G_x)$. On the other hand, the inequality $\alpha(G) -1 \geqslant \alpha(G_x)$ is obvious, so $\alpha(G_x)=\alpha(G)-1$. Note that $(G_x)_{ab} = (G_{ab})_x$ for all edges $ab$ of $G_x$. Since $G_{ab}$ is well-covered with $\alpha(G_{ab})=\alpha(G)-1$, by Lemma $\ref{LC}$ we have $(G_x)_{ab}$ is well-covered with $\alpha((G_x)_{ab}) = \alpha((G_{ab})_x) =\alpha(G_{ab})-1 =\alpha(G)-2 =\alpha(G_x)-1$. Therefore, $G_x$ is well-covered by the induction hypothesis. By part $(1)$ we have $G$ is well-covered.
\end{proof}

\begin{lem} \label{CT}Let $G$ be a triangle-free graph without isolated vertices. Then $G$ is in $W_2$ if and only if $G_{ab}$ is well-covered with $\alpha(G_{ab})=\alpha(G)-1$ for all edges $ab$.
\end{lem}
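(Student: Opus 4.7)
\emph{Forward direction.} My plan for $(\Rightarrow)$ is to assume $G\in W_2$ and fix an edge $ab$. First, I would invoke Staples' theorem mentioned just before the lemma to conclude that $G$ is $\alpha$-critical, so $\alpha(G\setminus ab)=\alpha(G)+1$; choosing an independent set $I$ of $G\setminus ab$ with $\{a,b\}\subseteq I$ and $|I|=\alpha(G)+1$, the subset $I\setminus\{a,b\}$ lands in $V(G_{ab})$ and is independent of size $\alpha(G)-1$, giving $\alpha(G_{ab})\geqslant\alpha(G)-1$; the reverse inequality is immediate. Next, to show $G_{ab}$ is well-covered, I would take a maximal independent set $M$ of $G_{ab}$ and extend $M\cup\{a\}$ to a maximum independent set $M_a=M\cup\{a\}\cup M'$ of $G$. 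Maximality of $M$ in $G_{ab}$ forces each $w\in M'$ into $N_G(a)\cup N_G(b)$, and independence of $M_a$ with $a\in M_a$ then places $w$ in $N_G(b)\setminus\{a\}$. If $M'\neq\emptyset$, I would pick $m'\in M'$ and use the $W_2$ property to select $w\in N_G(m')$ with $N_G(w)\cap M_a=\{m'\}$; the condition $a\in M_a\setminus\{m'\}$ forces $w\notin N_G(a)$, and triangle-freeness forbids the potential triangle on $\{w,m',b\}$ (since $m'\in N_G(b)$), forcing $w\notin N_G(b)$. Hence $w\in V(G_{ab})$ and is not adjacent to $M\subseteq M_a\setminus\{m'\}$, so $M\cup\{w\}$ is independent in $G_{ab}$, contradicting the maximality of $M$. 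Therefore $M'=\emptyset$ and $|M|=\alpha(G)-1$.

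\emph{Backward direction.} For $(\Leftarrow)$, my plan is to assume the edge hypothesis and induct on $\alpha(G)$. The base case $\alpha(G)=1$ forces $G=K_2\in W_2$. In the inductive step, Lemma \ref{CM1}(2) already gives $G$ well-covered. Fixing $x\in V(G)$, I would choose $y\in N_G(x)$ and a maximum independent set $T$ of $G_{xy}$ of size $\alpha(G)-1$; then $T\cup\{y\}$ is an independent set of $G\setminus x$ of size $\alpha(G)$, so $\alpha(G\setminus x)=\alpha(G)$. To establish well-coveredness of $G\setminus x$, I would apply Lemma \ref{CM1}(1) to $G\setminus x$, reducing to the claim that $(G\setminus x)_z$ is well-covered with $\alpha=\alpha(G)-1$ for every $z\neq x$. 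A direct computation gives $(G\setminus x)_z=G_z$ when $x\in N_G(z)$, which Lemma \ref{LC} handles; otherwise $(G\setminus x)_z=G_z\setminus x$, and I would apply the inductive hypothesis to $G_z$. Here $G_z$ is triangle-free with $\alpha(G_z)=\alpha(G)-1$, and for every edge $cd$ of $G_z$ the memberships $c,d\in V(G_z)$ force $z\notin N_G(c)\cup N_G(d)$, so $z\in V(G_{cd})$ and $(G_z)_{cd}=(G_{cd})_z$, which Lemma \ref{LC} applied to the well-covered $G_{cd}$ shows is well-covered with $\alpha=\alpha(G_z)-1$. Thus (after stripping off any isolated vertices) $G_z$ satisfies the hypothesis of the lemma and, by the inductive hypothesis, lies in $W_2$, whence $G_z\setminus x$ is well-covered with $\alpha=\alpha(G_z)=\alpha(G)-1$, as required.

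\emph{Main obstacle.} The delicate step will be the inductive treatment of isolated vertices of $G_z$, which arise precisely when some $v\in V(G_z)$ satisfies $N_G(v)\subseteq N_G(z)$. These must be separated out from the non-isolated subgraph $G_z^{\circ}$ before invoking the inductive hypothesis on $G_z^{\circ}$, and the conclusion then reassembled on $G_z$; the key is that any isolated vertex of $G_z$ belongs to every maximal independent set of $G_z$, so it preserves both well-coveredness and the $\alpha$-count under the further vertex deletions that appear in the argument.
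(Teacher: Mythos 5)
Your forward direction is correct and is in fact a genuinely different argument from the paper's: the paper proves that direction by induction on $\alpha(G)$, reducing well-coveredness of $G_{ab}$ to that of the localizations $(G_{ab})_x=(G_x)_{ab}$ via Lemma \ref{CM1}(1) and Lemma \ref{WLC}, whereas you argue directly. The swap property you invoke --- for a maximum independent set $M_a$ and $m'\in M_a$ there is $w\in N_G(m')$ with $N_G(w)\cap M_a=\{m'\}$ --- does follow from the definition of $W_2$ (extend $M_a\setminus\{m'\}$ to a maximal independent set of $G\setminus m'$, which must have size $\alpha(G)$), and the rest of that paragraph, including the triangle-freeness step, is sound.

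The backward direction, however, has a genuine gap at exactly the point you flag as the main obstacle: isolated vertices of $G_z$ cannot be ``separated out and reassembled''; they must be shown not to exist. Suppose $v$ is an isolated vertex of $G_z$, i.e.\ $v\notin\{z\}\cup N_G(z)$ and $N_G(v)\subseteq N_G(z)$, and run your argument with $x=v$. Then $z$ is not adjacent to $x$, so you need $(G\setminus x)_z=G_z\setminus x$ to be well-covered with $\alpha(G_z\setminus x)=\alpha(G\setminus x)-1=\alpha(G)-1=\alpha(G_z)$. But an isolated vertex lies in every maximal independent set of $G_z$, so $\alpha(G_z\setminus x)=\alpha(G_z)-1$; your decomposition $G_z=G_z^{\circ}\sqcup W$ only confirms this, since deleting $x\in W$ drops the independence number by one. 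Thus the reassembly yields $\alpha((G\setminus x)_z)=\alpha(G)-2$ and Lemma \ref{CM1}(1) cannot be applied --- your claim that isolated vertices ``preserve the $\alpha$-count under the further vertex deletions'' fails precisely when the deleted vertex is the isolated one. The missing idea is the paper's claim that under the hypothesis no $G_z$ has an isolated vertex: if $v$ were isolated in $G_z$, pick $w\in N_G(v)\subseteq N_G(z)$; then $\{z,v\}\cup N_G(z)\cup N_G(v)\subseteq N_G(z)\cup N_G(w)$, so $G_{zw}$ is an induced subgraph of $G_{\{z,v\}}$, whence $\alpha(G_{zw})\leqslant\alpha(G_{\{z,v\}})=\alpha(G)-2$ by Lemma \ref{LC}, contradicting the hypothesis $\alpha(G_{zw})=\alpha(G)-1$. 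Without this step your induction does not close.
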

\begin{proof} Assume that $G$ is in $W_2$. We will prove by induction on $\alpha(G)$ that $G_{ab}$ is well-covered with $\alpha(G_{ab})=\alpha(G)-1$ for all edges $ab$. Indeed, if $\alpha(G)=1$, then $G$ is an edge. In this case, $G_{ab}$ is empty, so it is well-covered.

Assume that $\alpha(G) \geqslant 2$. We first prove $\alpha(G_{ab}) =\alpha(G)-1$. Indeed, since $G$ is $\alpha$-critical by \cite[Theorem $3.10$]{Sp1}, we have $\alpha(G\setminus ab) > \alpha(G)$. Let $F$ be a maximal independent set in $G\setminus ab$ such that $|F| =\alpha(G\setminus ab)$. Then, $F\setminus\{a\}$ and $F\setminus\{b\}$ are independent sets in $G$.  Consequently, $|F| - 1 \leqslant \alpha(G)$. This implies $\alpha(G) = |F| -1$, so $\alpha(G\setminus ab) = |F| = \alpha(G)+1$.

Since $\{a,b\}$ is an independent set $G\setminus ab$, we have $G_{ab} = (G \setminus ab)_{\{a,b\}}$ and $F\setminus\{a,b\}$ is an independent set in $G_{ab}$. Thus, $\alpha(G_{ab}) \geqslant |F| - 2 =\alpha(G)-1$. On the other hand, since $G_{ab}$ is an induced subgraph of $G_a$, we have $\alpha(G_{ab}) \leqslant \alpha(G_a) = \alpha(G)-1$. Therefore, $\alpha(G_{ab}) =\alpha(G)-1$, as claimed.

Next, we prove that $G_{ab}$ is well-covered.  For any vertex $x$ of $G_{ab}$, by Lemmas $\ref{LC}$ and $\ref{WLC}$ we have $G_x$ is in $W_2$ and $\alpha(G_x) = \alpha(G)-1$. Note that $(G_{ab})_x = (G_x)_{ab}$, so by the induction hypothesis we have $(G_{ab})_x$ is well-covered with
$$\alpha((G_{ab})_x) = \alpha((G_x)_{ab})=\alpha(G_x)-1=\alpha(G)-2 = \alpha(G_{ab})-1.$$
Hence, $G_{ab}$ is well-covered according to Lemma $\ref{CM1}$.

Conversely, assume that $G_{ab}$ is well-covered and $\alpha(G_{ab})=\alpha(G)-1$ for all edges $ab$. We now prove by induction on $\alpha(G)$ that $G$ is in $W_2$. Note that $G$ is well-covered by Lemma $\ref{CM1}$. Moreover, $\alpha(G\setminus x)=\alpha(G)$ for all vertices $x$ since $G$ is well-covered without isolated vertices. Therefore, it remains to prove that $G\setminus x$ is well-covered. Now, if $\alpha(G)=1$, then $G$ is an edge. In this case, $G\setminus x$ is one vertex, so it is well-covered.

Assume that $\alpha(G)\geqslant 2$.  We first claim that $G_y$ has no isolated vertices for any vertex $y$ of $G$. Assume on contrary that $G_y$ has an isolated vertex $z$. Let $Y:=N_G(y)$ and $Z:=N_G(z)$, so that $Z\subseteq Y$. Note that $Y\ne\emptyset$ and $Z\ne\emptyset$ because $G$ has no isolated vertices. Since $\{y,z\}$ is an independent set of $G$ and $G$ is well-covered, $\alpha(G_{\{y,z\}}) = \alpha(G) - 2$ by Lemma $\ref{LC}$. On the other hand, for any $w$ in $Z$, we have $G_{yw}$ is an induced subgraph of $G_{\{y,z\}}$, hence $\alpha(G_{yw}) \leqslant \alpha(G_{\{y,z\}}) =\alpha(G)-2 < \alpha(G)-1$, a contradiction. Thus, $G_y$ has no isolated vertices, as claimed.

Now we return to prove that $G\setminus x$ is well-covered. It suffices to show that $(G\setminus x)_y$ is well-covered with $\alpha((G \setminus x)_y)=\alpha(G\setminus x)-1$ by Lemma $\ref{CM1}$. Indeed, if $y$ and $x$ are adjacent in $G$, then $(G\setminus x)_y = G_y$, and then $(G\setminus x)_y$ is well-covered with $\alpha((G \setminus x)_y)=\alpha(G_y)=\alpha(G)-1=\alpha(G\setminus x)-1$. If $y$ and $x$ are not adjacent in $G$, then $x$ is a vertex of $G_y$ and  $(G\setminus x)_y = (G_y)\setminus x$. For every edge $ab$ of $G_y$ we have $(G_y)_{ab} = (G_{ab})_y$, so $(G_y)_{ab}$ is well-covered and $\alpha((G_y)_{ab}) =\alpha((G_{ab})_y) =\alpha(G_{ab})-1 = \alpha(G)-1 -1 =\alpha(G_y)-1$. Since $G_y$ has no isolated vertices as the claim above and $\alpha(G_y)=\alpha(G)-1$, by the induction hypothesis we have $G_y$ is in $W_2$. In particular, $(G\setminus x)_y= (G_y)\setminus x$ is well-covered with $\alpha((G\setminus x)_y) = \alpha((G_y)\setminus x) = \alpha(G_y)=\alpha(G)-1=\alpha(G\setminus x)-1$, as required.
\end{proof}

\begin{lem} \label{main2} If $G$ is a triangle-free Gorenstein graph, then $I(G)^2$ is Cohen-Macaulay. \end{lem}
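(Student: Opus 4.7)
The plan is to invoke a criterion, established in the literature on second powers of squarefree monomial ideals (cf.\ \cite{HMT1}, \cite{MT}, \cite{RTY}, \cite{TT}), that reduces the Cohen-Macaulayness of $I(G)^2$ to two combinatorial conditions on $G$: (a) $G$ is Cohen-Macaulay, and (b) for every edge $ab$ of $G$, the localization $G_{ab}$ is Cohen-Macaulay with $\alpha(G_{ab}) = \alpha(G)-1$. Granting this criterion, condition (a) is automatic since every Gorenstein graph is Cohen-Macaulay, and the dimension equality in (b) follows from Proposition \ref{W2} (which places $G$ in $W_2$) together with Lemma \ref{CT}.

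The only remaining task is to verify the Cohen-Macaulayness of each $G_{ab}$. For this I would work inside the localization $G_b$, which is itself Gorenstein because $G$ is. Set $S := N_G(a)\setminus N_G[b]$. Since $G$ is triangle-free, $N_G(a)$ is independent, and hence so is $S$; and by construction $S\cap N_G[b]=\emptyset$, so $S$ is a face of $\Delta(G_b)$. A direct set-theoretic calculation, using $a\in N_G(b)$ and $b\in N_G(a)$, identifies the vertex set of $\Delta(G_b)\setminus S$ with $V(G)\setminus(N_G[a]\cup N_G[b]) = V(G_{ab})$, and shows that its faces are exactly the independent sets of $G_{ab}$; thus $\Delta(G_b)\setminus S = \Delta(G_{ab})$. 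Applying Corollary \ref{C1} to the Gorenstein complex $\Delta(G_b)$ and its face $S$ now yields that $\Delta(G_{ab})$ is Cohen-Macaulay, as required.

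The main obstacle is confirming that the literature supplies the criterion in exactly the form (a)--(b) above. Should this not be the case, the substitute strategy is to work with the short exact sequence $0\to I(G)/I(G)^2\to R/I(G)^2\to R/I(G)\to 0$, polarize $I(G)^2$ to a squarefree monomial ideal, and apply Reisner's criterion to the associated Stanley-Reisner complex; in that form, the crucial input remains Corollary \ref{C1}, which supplies the necessary vanishing of reduced homology on links via the Gorenstein hypothesis on $G$ and on each link $\Delta(G_x)$.
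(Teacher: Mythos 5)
Your proposal is correct and follows essentially the same route as the paper: reduce via \cite[Corollary 2.3]{HMT1} to showing each $G_{ab}$ is Cohen-Macaulay with $\alpha(G_{ab})=\alpha(G)-1$, get the independence number from Proposition \ref{W2} and Lemma \ref{CT}, and obtain Cohen-Macaulayness by realizing $\Delta(G_{ab})$ as $\Delta(G_b)\setminus A$ for the face $A=N_G(a)\setminus\{b\}$ (your $S$ equals this set, since triangle-freeness forces $N_G(a)\cap N_G(b)=\emptyset$) and applying Corollary \ref{C1}. The cited criterion is indeed available in exactly the form (a)--(b) you assume, so the fallback strategy is unnecessary.
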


\begin{proof}  We may assume that $G$ has no isolated vertices. By Proposition $\ref{W2}$ we have $G$ is in $W_2$. As $G$ is Gorenstein, by \cite[Corollary $2.3$]{HMT1} it suffices to show that $G_{ab}$ is Cohen-Macaulay with $\alpha(G_{ab})=\alpha(G)-1$ for every edge $ab$ of $G$. Let $ab$ be an edge of $G$. By Lemma $\ref{CT}$ we have $\alpha(G_{ab}) = \alpha(G)-1$. So it remains to prove that $G_{ab}$ is Cohen-Macaulay. Let $A:=N_G(a)\setminus \{b\}$, so that $A$ is a face of $\Delta(G_b)$ since $G$ is triangle-free. Note that $G_{ab} =G_b \setminus A$, so $\Delta(G_{ab}) = \Delta(G_b\setminus A) = \Delta(G_b)\setminus A$. Since $G_b$ is Gorenstein, by Corollary $\ref{C1}$ we have $\Delta(G_b)\setminus A$ is Cohen-Macaulay. It follows that $G_{ab}$ is Cohen-Macaulay, as required.
\end{proof}

Now we are ready to prove the main result of this paper.

\begin{thm} \label{mth}
\it $\ $ Let $G$ be a simple graph without isolated vertices. Then, the following conditions are equivalent:
\begin{enumerate}
\item $G$ is a triangle-free Gorenstein graph;
\item $G$ is a triangle-free member of $W_2$;
\item $I(G)^2$ is Cohen-Macaulay.
\end{enumerate}
\end{thm}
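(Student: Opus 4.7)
The plan is to close a cycle of implications. Proposition~\ref{W2} already supplies $(1)\Leftrightarrow(2)$ under the standing hypothesis that $G$ has no isolated vertices, and Lemma~\ref{main2} supplies $(1)\Rightarrow(3)$. My only remaining task is therefore to prove $(3)\Rightarrow(2)$.

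For this, the main external input is the characterization of Cohen-Macaulayness of $I(G)^2$ given by \cite[Corollary~2.3]{HMT1}, which was already exploited in the proof of Lemma~\ref{main2}. I will apply it in the forward direction: assuming $I(G)^2$ is Cohen-Macaulay, the corollary forces $G$ to be triangle-free and, for every edge $ab$ of $G$, the localization $G_{ab}$ to be Cohen-Macaulay with $\alpha(G_{ab})=\alpha(G)-1$. In particular each such $G_{ab}$ is well-covered with the prescribed independence number.

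With these conclusions in hand, the hypotheses of Lemma~\ref{CT} are exactly met: $G$ is triangle-free, has no isolated vertices by assumption, and $G_{ab}$ is well-covered with $\alpha(G_{ab})=\alpha(G)-1$ for every edge $ab$. Applying Lemma~\ref{CT} then delivers $G\in W_2$, which is condition~(2), and the cycle of implications is closed.

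The one place where extra care may be required is extracting triangle-freeness from the cited corollary; if that conclusion were not packaged into \cite[Corollary~2.3]{HMT1} directly, one can still derive it by hand. Cohen-Macaulayness of $I(G)^2$ forces it to be unmixed, whence $G$ must be well-covered. If $\{a,b,c\}$ spanned a triangle of $G$ then the degree-three monomial $x_ax_bx_c$ would lie outside $I(G)^2$ while $(x_a,x_b,x_c)\cdot x_ax_bx_c\subseteq I(G)^2$, since $x_a^2x_bx_c=(x_ax_b)(x_ax_c)\in I(G)^2$ and similarly for $x_b,x_c$. Extending $\{a\}$ to a maximum independent set $I$ of $G$, the minimal prime $P_I=(x_v\mid v\notin I)$ would contain $x_b,x_c$ but not $x_a$, so any associated prime of $R/I(G)^2$ containing $(x_a,x_b,x_c)$ properly contains $P_I$, contradicting unmixedness. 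Either way triangle-freeness is forced, and the theorem becomes a clean assembly of Proposition~\ref{W2}, Lemma~\ref{main2}, Lemma~\ref{CT}, and the cited corollary.
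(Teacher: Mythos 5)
Your proof is correct and follows essentially the same route as the paper: $(1)\Leftrightarrow(2)$ via Proposition~\ref{W2}, $(1)\Rightarrow(3)$ via Lemma~\ref{main2}, and $(3)\Rightarrow(2)$ by combining \cite[Corollary~2.3]{HMT1} (which does include triangle-freeness and the conditions on each $G_{ab}$) with Lemma~\ref{CT}. The extra fallback derivation of triangle-freeness is unnecessary (and its final unmixedness step is stated a bit loosely), but it does not affect the main argument.
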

\begin{proof} $(1)\Longleftrightarrow (2)$ follows from Proposition $\ref{W2}$.  $(1)\Longrightarrow (3)$ follows from Lemma $\ref{main2}$. $(3)\Longrightarrow (2)$ follows from \cite[Corollary 2.3]{HMT1} and Lemma $\ref{CT}$ since every Cohen-Macaulay graph is well-covered by \cite[Lemma $6.1.21$]{Vi2}.
\end{proof}

\begin{rem} Let $G$ be a connected graph in $W_2$. If $G$ is not $K_2$ or $C_5$, then $\girth(G) \leqslant 4$ by \cite[Theorem $6$]{Pi}. Therefore, all connected Gorenstein graphs of girth at least $5$ are only $K_1$, $K_2$ and $C_5$;  so that the structure of  triangle-free Gorenstein graphs is non trivial only for the ones of girth $4$. Using the classification of the planar graphs of girth $4$ in $W_2$ due to Pinter \cite{Pi} we obtain the family of the connected planar Gorenstein graphs of girth $4$ is $\{G_n\}_{n\geqslant 3}$ where $G_n$ is the graph on the vertex set $\{1,\ldots,3n-1\}$ with the edge ideal $I(G_n)$ being
$$\{x_1x_2, \{x_{3k-1}x_{3k}, x_{3k}x_{3k+1}, x_{3k+1}x_{3k+2}, x_{3k+2}x_{3k-2}\}_{k=1,\ldots,n-1}, \{x_{3l-3}x_{3l}\}_{l=2,\ldots,n-1}\}.$$
(see also  \cite{HMT2} for another proof)
\end{rem}
\begin{figure}[ht]
\begin{center}
 \includegraphics[scale=0.8]{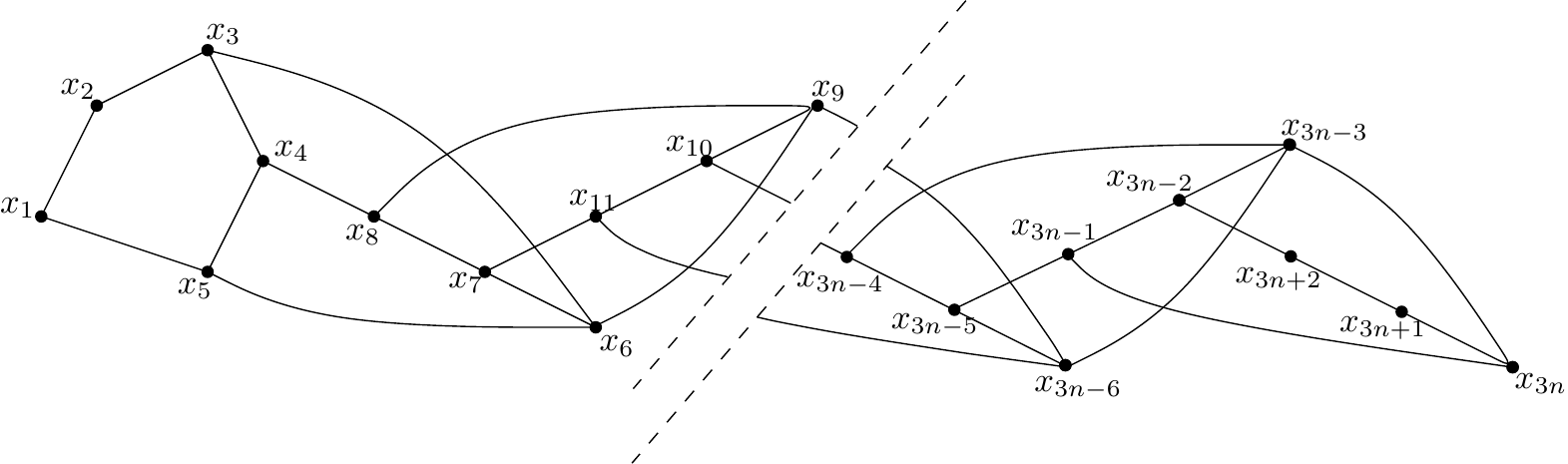}\\
  \caption[Figure 1.]{The graph $G_n$.}\label{Fig:1}
\end{center}
\end{figure}

\subsection*{Acknowledgment} We would like to thank Professors L. T. Hoa and  N. V. Trung for helpful comments. Part of this work was done while we were at the Vietnam Institute of Advanced Studies in Mathematics (VIASM) in Hanoi, Vietnam. We would like to thank VIASM for its hospitality. We would also like to thank anonymous referees for many
helpful comments.

\end{document}